\documentclass[11pt,a4paper]{article}
\usepackage[utf8]{inputenc}
\usepackage[T1]{fontenc}
\usepackage[english]{babel}
\usepackage{lmodern,microtype}
\usepackage{amsmath,amssymb,amsthm,mathtools} 
\usepackage{cite} 
\usepackage{eucal,enumitem}
\usepackage{comment}

\usepackage[nottoc]{tocbibind} 
\usepackage{tikz,mdframed} 
\usepackage[colorlinks,
  linkcolor=red!60!black,%
  citecolor=green!40!black,%
  urlcolor=black,%
]{hyperref} 

\usepackage[left=3.0cm,right=3.0cm,top=1.5cm,bottom=1.6cm]{geometry}
\usepackage{changepage} 
\usepackage{marginnote} 
\usepackage{zref-savepos,zref-abspage} 
\usepackage{expl3} 

\usepackage[notref,notcite,final%
]{showkeys} 

\usepackage[mathlines]{lineno}
\usepackage{etoolbox} 

\newcommand*\linenomathpatch[1]{%
   \expandafter\pretocmd\csname #1\endcsname {\linenomath}{}{}%
   \expandafter\pretocmd\csname #1*\endcsname{\linenomath}{}{}%
   \expandafter\apptocmd\csname end#1\endcsname {\endlinenomath}{}{}%
   \expandafter\apptocmd\csname end#1*\endcsname{\endlinenomath}{}{}%
 }
\newcommand*\linenomathpatchAMS[1]{%
    \expandafter\pretocmd\csname #1\endcsname {\linenomathAMS}{}{}%
    \expandafter\pretocmd\csname #1*\endcsname{\linenomathAMS}{}{}%
    \expandafter\apptocmd\csname end#1\endcsname {\endlinenomath}{}{}%
    \expandafter\apptocmd\csname end#1*\endcsname{\endlinenomath}{}{}%
}

\expandafter\ifx\linenomath\linenomathWithnumbers
\let\linenomathAMS\linenomathWithnumbers
\patchcmd\linenomathAMS{\advance\postdisplaypenalty\linenopenalty}{}{}{}
\else
\let\linenomathAMS\linenomathNonumbers
\fi

\linenomathpatchAMS{gather}
\linenomathpatchAMS{multline}
\linenomathpatchAMS{align}
\linenomathpatchAMS{alignat}
\linenomathpatchAMS{flalign}
\linenomathpatch{equation}

\newcommand{\Aut}[0]{\text{Aut}}

\colorlet{tn/color/theorem}{green!60!black}
\colorlet{tn/color/claim}{gray!60!white}
\colorlet{tn/color/definition}{black}
\colorlet{tn/color/question}{orange!60!white}
\colorlet{tn/color/conjecture}{orange!60!white}
\colorlet{tn/color/defi}{red!50!black}

\theoremstyle{definition}

\newtheoremstyle{slthm}
{0pt}
{0pt}
{\normalfont}
{}
{\bfseries\footnotesize}
{.\!}
{.6em}
{\thmname{#1}\thmnumber{ #2}\thmnote{\,\,--\ignorespaces#3}}
\theoremstyle{slthm}

\newlength{\mythmbar}
\newlength{\myinsep}
\newlength{\myleftinsep}
\newlength{\mylen}
\newlength{\thmskip}
\mdfsetup{skipabove=\thmskip,skipbelow=\thmskip,usetwoside=false,nobreak}

\newcommand{\tndefmdstyle}[2]{%
  \mdfdefinestyle{tn/#1}{%
    linewidth=\mythmbar,%
    linecolor=tn/color/#2,%
    bottomline=false,%
    topline=false,%
    innerleftmargin=\myleftinsep,%
    leftmargin=-\myinsep,%
    innerrightmargin=\myinsep,%
    innertopmargin=1pt,%
    innerbottommargin=0pt,%
    rightmargin=\myinsep,%
    skipabove=\bigskipamount,%
    skipbelow=3pt,%
    userdefinedwidth=\mylen}}

\tndefmdstyle{base}{theorem}
\tndefmdstyle{claim}{claim}
\tndefmdstyle{definition}{definition}
\tndefmdstyle{question}{question}

\newmdtheoremenv[style=tn/base]{theorem}{Theorem}

\newcommand{\tnmdenv}[3]{%
  \newmdtheoremenv[style=#1]{#2}[theorem]{#3}}

\tnmdenv{tn/base}{lemma}{Lemma}
\tnmdenv{tn/base}{proposition}{Proposition}
\tnmdenv{tn/claim}{claim}{Claim}
\tnmdenv{tn/claim}{example}{Example}
\tnmdenv{tn/base}{corollary}{Corollary}
\tnmdenv{tn/base}{fact}{Fact}
\tnmdenv{tn/question}{question}{Question}
\tnmdenv{tn/question}{problem}{Problem}
\tnmdenv{tn/question}{conjecture}{Conjecture}
\tnmdenv{tn/definition}{definition}{Definition}
\tnmdenv{tn/definition}{remark}{Remark}
\tnmdenv{tn/definition}{hole}{Hole}

\newtheoremstyle{tnUnnumb}
{0pt}
{0pt}
{\normalfont}
{}
{\bfseries\footnotesize}
{.}
{.5em}
{\thmnote{#3}}

\theoremstyle{tnUnnumb}
\newmdtheoremenv[style=tn/base]{unnumtheorem}{Theorem}
\newmdtheoremenv[style=tn/base]{unnumconjecture}{Conjecture}

\newcommand{\newproofenv}[3]{%
  \newenvironment{#1}[1][#2]{%
    \renewcommand{\qedsymbol}{\hfill{#3}}\begin{proof}[#2]%
  }{%
    \end{proof}\renewcommand{\qedsymbol}{\oldqed}%
  }%
}

\newproofenv{claimproof}{Proof}{$\diamond$}
\newproofenv{proofsketch}{Proof sketch}{$\bigtriangleup$}

\newcommand{\defistyle}[1]{\textcolor{definition}{\textsl{#1}}}
\newcommand{\defi}[1]{%
  \sidepar{{\tiny#1}}%
  {\defistyle{#1}}}
\newcommand{\sgn}[0]{\text{sgn}}

\ExplSyntaxOn\makeatletter
\int_new:N \g_tassio_note_int
\seq_new:N\g_tassio_note_seq
\renewcommand{\defi}{\@dblarg\tn@defi} 
\renewcommand{\defistyle}[1]{%
  \textcolor{tn/color/defi}{\textsl{#1}}%
}
\def\tn@defi[#1]#2{%
  \int_gincr:N \g_tassio_note_int
  \bool_if:nTF
  {
   \int_compare_p:n
    {
     \zposy{tassionotepos\int_eval:n{\g_tassio_note_int}}
     =
     \zposy{tassionotepos\int_eval:n{\g_tassio_note_int +1}}
    }
   &&
   \int_compare_p:n
   {
    \zref@extractdefault { tassionotepage\int_eval:n{\g_tassio_note_int } }{abspage}{-1}
    =
    \zref@extractdefault { tassionotepage\int_eval:n{\g_tassio_note_int +1} }{abspage}{-1}
   }
  }
   {
    \seq_gput_right:Nn \g_tassio_note_seq {#1}
   }
   {
    \seq_gput_right:Nn \g_tassio_note_seq {#1}
    \marginnote{\fontsize{7pt}{5pt}\selectfont%
      \baselineskip=.6\baselineskip 
      \lineskip=-1.2pt 
      \seq_use:Nn \g_tassio_note_seq {,\ }}
    \seq_gclear:N \g_tassio_note_seq
   }
  \zref@label {tassionotepage\int_use:N\g_tassio_note_int}\zsaveposy {tassionotepos\int_use:N\g_tassio_note_int}
  \defistyle{#2}}

\ExplSyntaxOff

\def\tn@defi[#1]#2{{\defistyle{#2}}}

\makeatother

\makeatletter
\renewcommand{\@biblabel}[1]{\textcolor{gray}{[\,}#1\textcolor{gray}{\,]}}
  \renewcommand\@openbib@code{
    \setlength\labelwidth{2cm}%
    \setlength{\itemindent}{0cm}%
    \setlength{\leftmargin}{0cm}%
    \setlength\labelsep{.8em}%
    }
\makeatother

\newcommand{\out}{\mathrm{out}}

\newcommand{\UG}[0]{\textrm{UG}}

\definecolor{mygreen}{rgb}{0.01, 0.75, 0.24}

\let\emptyset\varnothing

\def\emb{\mathop{\text{\rm emb}}\nolimits}

\def\({\left(}
\def\){\right)}
\let\:=\colon

\let\geq\geqslant
\let\leq\leqslant

\newcommand{\Aplain}[1]{\ensuremath{(A_{#1})}}
\newcommand{\Bplain}[1]{\ensuremath{(B_{#1})}}
\newcommand{\Cplain}[1]{\ensuremath{(C_{#1})}}
\newcommand{\A}[1]{\hyperref[config_A]{\Aplain{#1}}}
\newcommand{\B}[1]{\hyperref[config_B]{\Bplain{#1}}}
\newcommand{\C}[1]{\hyperref[config_B]{\Cplain{#1}}}

\begin{document}
\begin{center} 
  {\Large\noindent\textbf{On converse invariant trees of diameter four}}

  \medskip
  Fernando Afonso\footnote{Instituto de Matemática e Estatística, Universidade de São Paulo, Brazil \texttt{fernando.s.afonso@usp.br}}
  \qquad
  Lucas Colucci\footnote{Instituto de Matemática e Estatística, Universidade de São Paulo, Brazil \texttt{tnaia@crm.cat},
research supported by FAPESB (EDITAL FAPESB Nº 012/2022 - UNIVERSAL -
NºAPP0044/2023). FAPESB is the Bahia Research Foundation.}
  \qquad
  T\'assio Naia\footnote{Centre de Recerca Matem\`atica, Edifici C, Campus Bellaterra, 08193, Spain, \texttt{tnaia@crm.cat},
research supported by the European Union's Horizon Europe Marie Sk\l{}odowska-Curie grant PARTIORI -- project number 101207083 \raisebox{-.2ex}{\includegraphics[height=2.2ex]{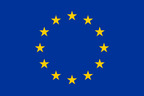}}.}
\end{center} 

\begin{center} 
  
 \begin{abstract}
Let $D$ be an oriented graph, and let $f_T(D)$ denote the number of
copies of $D$ in a tournament $T$. We say that $D$ is
\emph{converse invariant} if
$f_T(D)=f_T(\overline D)$
for every tournament $T$, where $\overline D$ is obtained from $D$
by reversing all arcs. Ai, Gutin, Lei, Yeo, and Zhou introduced a
digraph polynomial for studying this property and conjectured that an
orientation of a tree of maximum degree at least $3$ is converse
invariant if and only if it is self-converse or can be obtained
recursively by bridge-mirroring from an orientation of a path.

We disprove this conjecture. More precisely, we characterize
converse-invariant orientations of trees of diameter four and exhibit
non-self-converse examples that do not arise from the recursive
bridge-mirroring construction. To prove the classification, we introduce a multilinear
polynomial $P_D$ encoding the difference
$f_T(D)-f_T(\overline D)$ over all tournaments $T$, and we give a
coefficient formula for $P_D$ as a signed sum over copies of
subgraphs of the underlying graph of $D$. This polynomial method yields parity
obstructions, gives new proofs that oriented paths and cycles are
converse invariant, and provides the main tool for the diameter-four
classification.
\end{abstract}

\end{center} 

\section{Introduction}
Let $D$ be an oriented graph, i.e., a simple digraph without loops and having no opposite pairs of directed edges, and $T$ be a tournament. We denote by $f_T(D)$ the number of copies of $D$ in $T$, i.e., the number of subdigraphs of $T$ isomorphic to $D$, and by $\overline{D}$ the \emph{converse oriented graph} of $D$, i.e., the oriented graph obtained from $D$ by changing the direction of every arc of $D$. In this paper, we are interested in studying oriented graphs $D$ that satisfy $f_T(D)=f_{\overline{T}}(D)$, or, equivalently, $f_T(D)=f_T({\overline{D}})$, for every tournament $T$. We say that the oriented graph is \emph{converse invariant} if this condition holds.

An oriented graph $D$ is called \emph{self-converse} if $D$ and $\overline{D}$ are isomorphic oriented graphs. Clearly, every self-converse oriented graph is converse invariant. In 2023, El Sahili and Hanna \cite{el2023number} asked whether it is possible to characterize the converse invariant digraphs, and found the first family of converse invariant digraphs that are not necessarily self-converse:

\begin{theorem}[\ (El Sahili, Hanna) \cite{el2023number}]\label{thm:elsahili}
    Every oriented path and cycle is converse invariant.
\end{theorem}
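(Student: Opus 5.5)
The plan is to write $f_T(D)-f_T(\overline D)$ as a polynomial in the $\pm1$ "orientation variables" of $T$, and to show that every coefficient that survives the subtraction vanishes, using reversal of subpaths.

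\textbf{Setup.} Fix a tournament $T$ on a vertex set $V$ with $|V|=n$. For distinct $u,w\in V$ put $S_{uw}=1$ if $u\to w$ in $T$ and $S_{uw}=-1$ otherwise. Then $S$ is skew-symmetric, $S_{wu}=-S_{uw}$, and the indicator that $u\to w$ equals $\tfrac12(1+S_{uw})$.

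Let $D$ be an oriented path $v_0v_1\cdots v_k$ with sign pattern $s\in\{\pm1\}^k$, where $s_i=1$ iff $v_{i-1}\to v_i$. Counting labelled embeddings $x=(x_0,\dots,x_k)$ of distinct vertices of $T$, we have $f_T(D)=N_s/|\mathrm{Aut}(D)|$, where
\[
N_s=\sum_{x}\prod_{i=1}^k \tfrac12\bigl(1+s_iS_{x_{i-1}x_i}\bigr).
\]
The converse $\overline D$ is the path with pattern $-s$, and $|\mathrm{Aut}(\overline D)|=|\mathrm{Aut}(D)|$, so it suffices to show $N_s=N_{-s}$.

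Expanding the product gives
\[
N_s=2^{-k}\sum_{I\subseteq[k]}\Bigl(\prod_{i\in I}s_i\Bigr)c_I,
\qquad
c_I=\sum_{x}\prod_{i\in I}S_{x_{i-1}x_i},
\]
where $x$ ranges over injective sequences. Replacing $s$ by $-s$ multiplies the $I$-term by $(-1)^{|I|}$. Hence
\[
N_s-N_{-s}=2^{1-k}\sum_{|I|\text{ odd}}\Bigl(\prod_{i\in I}s_i\Bigr)c_I ,
\]
and the goal reduces to showing $c_I=0$ whenever $|I|$ is odd.

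\textbf{Key step: an involution on embeddings.} Split the edge index set $I$ into maximal runs of consecutive indices. Each run of length $r$ corresponds to a subpath $x_a,x_{a+1},\dots,x_{a+r}$ of the sequence, and distinct runs use disjoint vertex positions. Consider the map on injective sequences that reverses the order of the vertices inside every run-subpath and leaves all other positions fixed.

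This map is an involution on injective sequences: it only permutes the entries of $x$, so it preserves distinctness. On a run of length $r$ it sends the factor $\prod S_{x_{j-1}x_j}$ to the same product with every pair reversed. By skew-symmetry that multiplies the factor by $(-1)^r$. Overall the summand of $c_I$ is multiplied by $(-1)^{|I|}$. Since the map is a bijection of the index set of the sum, $c_I=(-1)^{|I|}c_I$, so $c_I=0$ for odd $|I|$. This proves the statement for paths.

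\textbf{Cycles.} For an oriented cycle $v_0\cdots v_{k-1}v_0$ the same expansion applies, with indices taken cyclically. If $I$ is a proper subset of the edge set, its runs are again subpaths with disjoint vertex sets, and reversing each of them works verbatim. If $I$ is the whole edge set and $k$ is odd, reverse the entire cyclic sequence instead, $x_j\mapsto x_{-j}$. This preserves injectivity and turns every factor $S_{x_{j-1}x_j}$ into its negative, giving $c_I=(-1)^kc_I=-c_I$, so again $c_I=0$. The automorphism groups of $D$ and $\overline D$ coincide, so $f_T(D)=f_T(\overline D)$.

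\textbf{Main obstacle.} The delicate point is the injectivity constraint. Counting homomorphisms (walks) instead would make the result immediate by transposing $1^{\top}A_{s_1}\cdots A_{s_k}1$. With injectivity, one has to check carefully that the run-wise reversal really is a well-defined bijection on injective sequences, including runs that touch the ends of the path and the wrap-around case for cycles. I expect this to go through because the runs occupy disjoint blocks of positions. I would also double-check the normalization by automorphisms. If needed, I would recast the argument in terms of the paper's polynomial $P_D$, whose coefficients are exactly these signed sums $c_I$ over copies of subgraphs, namely linear forests, of the underlying graph.
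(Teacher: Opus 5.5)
Your proof is correct and is essentially the paper's argument. The paper also expands $f_T(D)-f_T(\overline D)$ as a multilinear polynomial $P_D$ in the $\pm1$ orientation variables and keeps only the odd-degree terms. It then kills each odd coefficient by pairing an embedding with the one obtained by reversing an odd path component; for the full odd cycle it uses the reflection through a vertex and the midpoint of the opposite edge, which is exactly your $x_j\mapsto x_{-j}$. The remaining differences are only bookkeeping and do not affect the cancellation: you index terms by edge subsets $I$ of $D$ with $T$ fixed and reverse all runs at once, whereas the paper indexes by monomials $x^H$ and reflects a single odd component.
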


More recently, in 2025, Ai, Gutin, Lei, Yeo, and Zhou \cite{ai2025number} classified converse invariant oriented trees of diameter at most 3:

\begin{theorem}[\ (Ai, Gutin, Lei, Yeo, Zhou) \cite{ai2025number}]\label{thm:ai}
    Let $D$ be an orientation of a tree with diameter at most three that is not a path. Then $D$ is converse invariant if and only if it is self-converse, or $D$ or its converse is isomorphic to the digraph shown in Figure \ref{fig:bmex}.
\end{theorem}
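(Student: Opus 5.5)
Trees of diameter at most three that are not paths are stars $K_{1,n}$ with $n\ge 3$ and double stars. I would handle both with small test tournaments that separate $D$ from $\overline D$, and verify the one exceptional digraph of Figure~\ref{fig:bmex} directly.

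\emph{Stars.} Let $D$ be an orientation of $K_{1,n}$ with out-degree $a$ and in-degree $b$ at the centre. Then $f_T(D)=\sum_{v}\binom{d^+(v)}{a}\binom{d^-(v)}{b}$ and $f_T(\overline D)=\sum_v\binom{d^+(v)}{b}\binom{d^-(v)}{a}$. If $a=b$, $D$ is self-converse. If $a\neq b$, take $T$ transitive on $n+1$ vertices. The source contributes $\binom{n}{a}\binom{0}{b}$, which is nonzero only when $b=0$; the sink behaves symmetrically. A cleaner choice is a tournament in which a single vertex has out-degree $n$ and every other vertex has out-degree below $\min(a,b)$ or above $\max(a,b)$ in a suitable sense. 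More robustly, compare the polynomials $\sum_v \binom{x_v}{a}\binom{N-1-x_v}{b}$ over score sequences. The transitive tournament on $N$ vertices gives $\sum_{k}\binom{k}{a}\binom{N-1-k}{b}$, which is symmetric, so it fails to separate. Instead I would take $T$ to be a vertex dominating a regular tournament, or dominated by one, and compute explicitly; this shows the two counts differ when $a\ne b$.

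\emph{Double stars.} Let the central edge be $uv$, oriented $u\to v$ say, with $u$ having $a_1$ out-leaves and $b_1$ in-leaves and $v$ having $a_2$ and $b_2$. Counting copies of $D$ reduces to summing over arcs $x\to y$ of $T$ products of binomials in the out- and in-neighbourhoods of $x$ and $y$ minus one, with the subtlety that leaves must be distinct. I would handle distinctness by inclusion–exclusion over common neighbours. I would then evaluate this on a few specific tournaments: transitive tournaments, and a vertex added to a transitive or regular tournament. The resulting integer identities force $(a_1,b_1,a_2,b_2)$ either to match the self-converse pattern $(b_2,a_2,b_1,a_1)$ or to equal the exceptional parameters.

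For the exceptional digraph, I would verify $f_T(D)=f_T(\overline D)$ for all $T$ by expressing both counts through the score sequence and arc-local statistics, and showing the difference vanishes identically, for instance via the bridge-mirroring symmetry with a path.

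The main obstacle is the double-star case. There the counts depend on more than score sequences, so choosing test tournaments that rule out every non-self-converse parameter set, apart from the exception, requires careful case analysis.
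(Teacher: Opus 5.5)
\paragraph{Stars.} Your treatment of stars is correct once you commit to the last test tournament, and it takes a different, more elementary route than the paper. The paper shows $c_D\neq0$ when $n$ is odd, and $c_{D-e}\propto a-b$ when $n$ is even. To confirm your computation: if $T$ is a vertex dominating a regular tournament on $2k+1$ vertices, then for $a,b\geq1$ and $k\geq\max(a,b)$
\[
f_T(D)-f_T(\overline D)=(2k+1)\binom{k}{a}\binom{k}{b}\Bigl(\frac{b}{k+1-b}-\frac{a}{k+1-a}\Bigr)\neq 0 .
\]
For $ab=0$ and $k=n-2$, the difference is $\pm\binom{2n-3}{n}\neq0$. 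The detours through transitive tournaments should simply be removed.

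\paragraph{Double stars: the gap.} The double-star case, where the theorem has content beyond stars, has a genuine gap.

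First, part of your test set is provably useless. If $T\cong\overline T$, then $f_T(\overline D)=f_{\overline T}(D)=f_T(D)$ for every $D$. So transitive tournaments, and a vertex dominating or dominated by one (which is again transitive), can never separate $D$ from $\overline D$. You noticed this for stars but list them again here.

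More seriously, the decisive step, that ``the resulting integer identities force'' $(a_1,b_1,a_2,b_2)$ to be $(b_2,a_2,b_1,a_1)$ or exceptional, is asserted rather than derived. After your inclusion--exclusion, the counts depend arc by arc on the sizes of common in- and out-neighbourhoods. These are not determined by scores, so ``a vertex added to a regular tournament'' does not even specify the numbers. Nothing explains which tournaments give which equations, or why finitely many evaluations would exclude every one of the infinitely many non-self-converse quadruples; you flag this as the main obstacle but leave it unresolved.

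Similarly, the exceptional digraph is handled only if ``via bridge-mirroring'' means citing the theorem of Ai et al.\ that bridge-mirroring preserves converse invariance, applied to the path $\bullet\to\bullet\leftarrow\bullet$. The paper relies on the same fact.

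\paragraph{The missing idea.} You need a systematic source of linear constraints on the leaf counts. The paper gets these from the coefficients $c_H$ of $P_D$: each is a signed count of copies of $H$ in $\UG(D)$, and each must vanish. Write $p=a_1+b_1$ and $q=a_2+b_2$.

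If $p\neq q$, the coefficient of $x^D$ (for $p+q$ even) or of $x^{D-uv}$ (for $p+q$ odd) comes from a single copy with constant sign. It is therefore nonzero, which forces $p=q$. One caveat: if one centre has a single leaf, then $D-uv$ has a $K_2$ component, and the coefficient of any $x^H$ with a $K_2$ component vanishes identically. In that case, test with the full star at the other centre instead.

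For $p=q\geq3$, deleting $uv$ and one leaf-edge gives $c_H$ proportional to $a_1+a_2-b_1-b_2$. Together with $p=q$, this is exactly the self-converse condition.

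For $p=q=2$, the three-edge star gives $c_H$ proportional to $(-1)^{d^-(u)}+(-1)^{d^-(v)}$. This vanishes only for the self-converse orientations and for the exceptional digraph or its converse.

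An explicit-tournament proof would have to exhibit tournaments realizing each of these constraints, and your proposal does not.
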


\begin{figure}[h]
    \centering
    \includegraphics[width=0.4\linewidth]{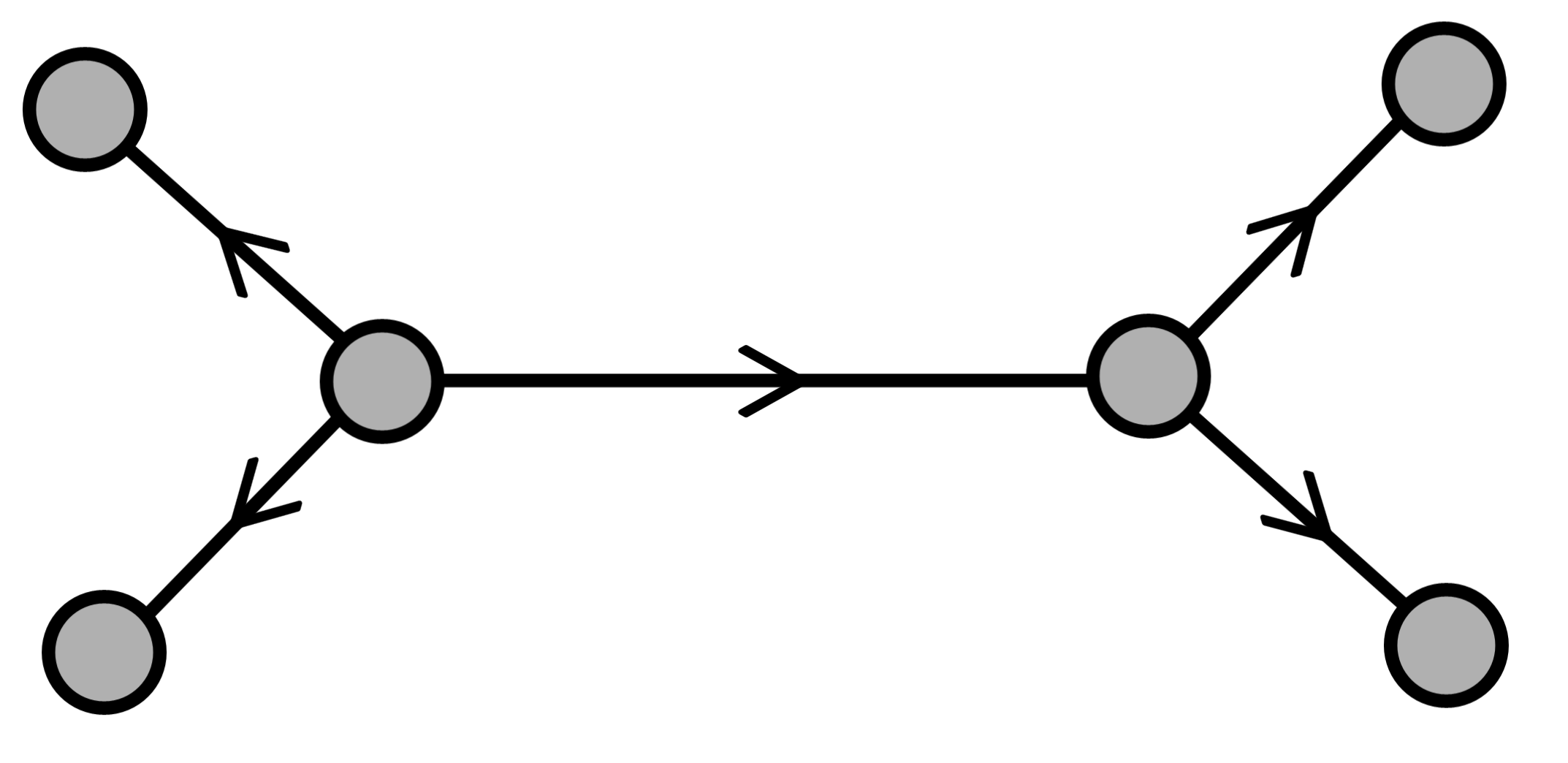}
    \caption{A converse invariant tree of diameter $3$}
    \label{fig:bmex}
\end{figure}

For a digraph $D$ and $u \in V(D)$, the \textit{bridge-mirroring} of $D$ at the vertex $u$ is the digraph obtained by taking two copies of $D$ and adding an arc between the two vertices corresponding to $u$. This operation was introduced by Zhao and Zhou \cite{zhao2020impartial}.

In \cite{ai2025number}, it was proved that the bridge-mirroring preserves the converse invariance property. Furthermore, the authors conjecture that this is essentially the only way to produce converse invariant trees:

\begin{conjecture}[\ (Ai, Gutin, Lei, Yeo, Zhou) \cite{ai2025number}]\label{conj:ai}
    Let $T$ be an orientation of a tree with maximum degree at least 3. Then $T$ is converse invariant if and only if it is self-converse or $T$ can be obtained by the bridge-mirroring operation recursively from an orientation of a path.
\end{conjecture}

In this paper, we disprove Conjecture \ref{conj:ai}, and introduce a new polynomial that encodes the orientations of a tournament that allow us to find new obstructions for a digraph to be converse invariant (Theorem \ref{thm:oddoddodd}), give new proofs of Theorems \ref{thm:elsahili} and \ref{thm:ai} and characterize the converse invariant diameter-four trees as follows:

\begin{definition}\label{def:exceptionaldiam4}
Let $h\geq 1$.  An oriented tree $T$ belongs to the family
$\mathcal E_4$ if it is constructed as follows.  Start with a path
$$
        x-u-y
$$
and attach $2h$ leaves to each of the three vertices $x,u,y$.  Orient
the $2h$ pendant edges incident with each of $x,u,y$ so that exactly
$h$ of them point away from the center and exactly $h$ point towards
the center.  Finally, orient the two edges $xu$ and $uy$ either both
towards $u$, or both away from $u$.

Equivalently, $T$ is obtained from three copies of the same balanced
oriented star by identifying their centers with the vertices of a path
of length two, and orienting the two edges of this path so that the
middle vertex is either a source or a sink.
\end{definition}

Our main result, proved in Section~\ref{subsec:trees4}, is the following classification of diameter-four converse invariant trees:

\begin{theorem}\label{thm:diam4classification}
Let $T$ be an orientation of a tree of diameter four which is not a path.
Then $T$ is converse invariant if and only if either $T$ is
self-converse or $T\in\mathcal E_4$.
\end{theorem}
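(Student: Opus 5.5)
We would prove the classification with the polynomial method announced in the abstract. For a tournament on vertex set $[n]$, encode each pair $ij$ by a variable $x_{ij}\in\{\pm1\}$ recording its orientation. Then $P_D(x)=f_T(D)-f_T(\overline D)$ is a multilinear polynomial in these variables, and $D$ is converse invariant if and only if $P_D\equiv 0$. Each monomial $\prod_{e\in E(H)}x_e$ is indexed by a copy $H$ of a subgraph of the underlying tree. Using the coefficient formula, its coefficient is a signed count: we sum over embeddings of $D$ whose image contains $H$, weighted by the product of the orientation signs of $D$ on $E(H)$. Terms with $|E(H)|$ even cancel between $D$ and $\overline D$, so only odd-size $H$ survive. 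The task is to show that all odd coefficients vanish exactly in the two stated situations.

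\textbf{Sufficiency.} Self-converse trees are trivially converse invariant. For $T\in\mathcal E_4$ we check that every odd coefficient vanishes. The key observation is that each center $x,u,y$ carries a balanced star ($h$ out-leaves, $h$ in-leaves). Hence any copy of $H$ using a pendant edge at a center can be paired with the copy using a pendant edge of the opposite orientation at that center. This is a sign-reversing involution, so those contributions cancel. The only monomials left are those supported on edges among centers, together with leaf edges placed symmetrically. Since $xu,uy$ both point into $u$ (or both out of $u$), the subgraph $x-u-y$ has even size, and its single edges come in canceling pairs (one out of $u$-direction from each side). We expect a short case check over the possible shapes of $H$ (stars, the spine, and spine plus leaves) to finish this direction.

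\textbf{Necessity.} We write a diameter-four tree as a center $u$ with neighbors $v_1,\dots,v_k$, where $v_i$ has $a_i$ out-leaves and $b_i$ in-leaves, and the edge $uv_i$ has sign $\varepsilon_i$. We then extract conditions from low-degree odd coefficients.

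1. Single edge $H=K_2$: the coefficient is proportional to (number of arcs) minus (number of reversed arcs), which is $0$. So we need $H$ with three edges.

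2. Stars $K_{1,3}$ centered at each possible vertex. These give, via Theorem~\ref{thm:oddoddodd}-type parity obstructions, sums of cubes/products of the out- and in-degree imbalances $d^+(w)-d^-(w)$. We expect these to force balanced degree structure at each vertex.

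3. Paths $P_4$. These give relations between the imbalances of adjacent vertices weighted by the arc signs.

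4. The spider $K_{1,3}$ subdivided, and $P_5$ with pendants, for the higher terms.

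We then combine these identities. Since these coefficients are exactly the counting quantities defining $P_D$, each identity is a polynomial equation in the $a_i,b_i,\varepsilon_i$ and the leaves at $u$. Solving the system, we aim to show that either the multiset of branches $\{(a_i,b_i,\varepsilon_i)\}$ is invariant under $(a,b,\varepsilon)\mapsto(b,a,-\varepsilon)$ together with the leaves at $u$, which makes $T$ self-converse, or we land in the rigid configuration: exactly two non-leaf branches, all three centers balanced with equal $2h$, and $u$ a source or sink on the spine, which is $\mathcal E_4$.

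\textbf{Main obstacle.} The hardest step is this last algebraic elimination. The system of coefficient identities is nonlinear, involving products of imbalances across branches. Showing that its only solutions are the self-converse ones or $\mathcal E_4$ (in particular, pinning down equal star sizes $2h$ at all three centers) requires choosing the right finite set of subgraphs $H$. We would first try the coefficients of $K_{1,3}$ at $u$ and $P_4$ through $u$, then the $5$-edge subgraphs to separate the exceptional family. If that fails, we would fall back to a maximal-branch induction: strip off a pair of mirror branches and show that the remaining tree must still satisfy the identities.
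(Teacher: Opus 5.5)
\textbf{Necessity is not proved, and the proposed route cannot work.} The ``algebraic elimination'' you postpone is the whole content of the theorem. Deriving it from a fixed finite family of small test graphs ($K_{1,3}$, $P_4$, $5$-edge graphs) cannot succeed.

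A single coefficient such as that of $K_{1,3}$ is one global identity, a sum of $K_3^{(d(w))}(i(w))$ over all vertices $w$. It cannot force balance at each vertex. Theorem~\ref{thm:oddoddodd} also does not apply to it, since $|\Aut(K_{1,3})|=6$.

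More seriously, if $|E(H)|\le m$, every copy of $\UG(H)$ uses at most $m$ leaf-edges of any branch. So $c_H$ depends on the multiset of values $k$ over the $A$-branches (resp.\ $B$-branches) of $S_t$ only through its power sums of degree at most $m$. For a concrete counterexample, start from a self-converse tree with $8$ $A$-branches and $8$ $B$-branches in $S_{15}$, both with $k$-multiset $\{0,3,5,6,9,10,12,15\}$. Replace the $A$-multiset by $\{1,2,4,7,8,11,13,14\}$; the two sets agree in power sums up to degree $3$. The result is a non-self-converse tree with $p(T)=16$ all of whose coefficients with at most three edges vanish. Prouhet--Thue--Morse sets give the same for any bound $m$.

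The missing idea is to use test graphs of unbounded size that are nearly all of $T_\out$:
\begin{itemize}
\item delete one central edge;
\item or delete $l$ leaf-edges at one branch of $S_t$, for every $l\le t$;
\item or delete an anchor branch's central edge together with $l$ leaf-edges elsewhere.
\end{itemize}
Since $\UG(T)$ has diameter four, every copy is rooted at $u$. A bookkeeping argument shows the copies are principal up to chains of shortenings through lower levels. Induction on $t$ then gives a triangular system in the moments $M_l^t=\sum_k(a_k^t-b_k^t)K_l^{(t)}(k)$, $0\le l\le t$, which forces $a_k^t=b_k^t$ when $p(T)\ge 4$. The parity lemma ($|S_t|$ even for every $t$) reduces the remaining case to two non-leaf branches of equal size. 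There a few explicit coefficients, including the $K_{1,3}$ computation giving $-4(a-h)^2$, produce exactly $\mathcal E_4$.

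Your fallback of stripping off a mirror pair presupposes two things you do not justify: that such a pair exists, and that converse invariance passes to the smaller tree.

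\textbf{The sufficiency sketch fails at its last step.} Swapping leaves at a balanced centre cancels only copies that use an odd number of pendant edges there. A $2j$-subset contributes $K_{2j}^{(2h)}(h)=(-1)^j\binom{h}{j}\neq 0$. So the survivors are copies with exactly one spine edge plus matched in/out pendant pairs at each centre.

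Pairing a copy through $ux$ with its mirror through $uy$ does \emph{not} cancel. Both spine edges have the same direction relative to $u$, so the swap $x\leftrightarrow y$ preserves signs. It would cancel only in the opposite-direction case, which is self-converse anyway.

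What does work is exchanging the pendant pairs at the two ends of the spine edge used, which reverses exactly that edge. Fixed copies of this exchange have an automorphism reversing exactly one edge, so their inner isomorphism sum is zero.

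The paper avoids all this with a direct count. Fix the $3$-set $C$ of centre images; the number of balanced pendant choices does not depend on which centre plays which role. A cyclic $C$ admits no embedding of either $E_h^{+}$ or $E_h^{-}$. A transitive $C$ admits exactly two centre maps for each: the middle vertex goes to the source for $E_h^{+}$ and to the sink for $E_h^{-}$.
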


\section{Preliminaries}\label{sec:prelim}

It is enough to test converse invariance on tournaments with exactly $|V(D)|$ vertices. Indeed, if the equality holds for every tournament on $|V(D)|$ vertices and $T$ is a larger tournament, then summing the equality over all $|V(D)|$-vertex subtournaments of $T$ gives $f_T(D)=f_T(\overline D)$, since each copy of $D$ uses exactly $|V(D)|$ vertices. Throughout the paper, $\UG(D)$ denotes the underlying graph of $D$, obtained from $D$ by ignoring the orientations of the edges.

Let $D$ be an oriented graph on $n$ vertices and put $e(D)=|E(D)|$. We shall associate to $D$ a multilinear polynomial $P_D$ whose non-vanishing on a point of $\{\pm1\}^{\binom n2}$ is equivalent to the existence of a tournament $T$ with $f_T(D)\neq f_T(\overline D)$.

Index the vertices of both $D$ and a tournament $T$ by $[n]=\{1,\dots,n\}$. For each unordered pair $ij$, let $x_{ij}=x_{ji}$ be $1$ if the edge of $T$ is directed from $\min(i,j)$ to $\max(i,j)$, and $-1$ otherwise. Similarly, for each edge $ij\in E(D)$, let $y_{ij}=y_{ji}$ be $1$ if the arc of $D$ is directed from $\min(i,j)$ to $\max(i,j)$, and $-1$ otherwise. When $D$ is acyclic, we shall always choose a topological labeling of $D$, so that $y_{ij}=1$ for every edge $ij\in E(D)$.

For a permutation $\pi\in S_n$, we embed vertex $i$ of $D$ into vertex $\pi(i)$ of $T$. This embedding is a copy of $D$ if and only if
\[
        x_{\pi(i)\pi(j)}y_{ij}(-1)^{(ij)_\pi}=1
\]
for every $ij\in E(D)$, where $(ij)_\pi=0$ if $\pi(i)<\pi(j)$ and $(ij)_\pi=1$ otherwise. Therefore
\begin{equation*}\label{eq:copy-count}
    f_T(D)=\frac{1}{|\Aut(D)|}\sum_{\pi\in S_n}\prod_{ij\in E(D)}
    \frac{1+x_{\pi(i)\pi(j)}y_{ij}(-1)^{(ij)_\pi}}{2}.
\end{equation*}

We write $\emb_T(D)$ for the number of labeled embeddings of $D$ into $T$.
Thus
$$
        f_T(D)=\frac{\emb_T(D)}{|\Aut(D)|}.
$$

We define
\begin{align*}\label{eq:PD-definition}
    P_D(\underline{x})
    &\coloneqq 2^{e(D)}|\Aut(D)|\bigl(f_T(D)-f_T(\overline D)\bigr) \\
    &=\sum_{\pi\in S_n}\left(
       \prod_{ij\in E(D)}\bigl(1+x_{\pi(i)\pi(j)}y_{ij}(-1)^{(ij)_\pi}\bigr)
       -\prod_{ij\in E(D)}\bigl(1-x_{\pi(i)\pi(j)}y_{ij}(-1)^{(ij)_\pi}\bigr)
       \right).
\end{align*}
Thus $D$ is not converse invariant if and only if $P_D$ is nonzero at some point of $\{\pm1\}^{\binom n2}$.

The polynomial $P_D$ is multilinear and all its monomials have odd degree. If $H$ is a graph whose edges are pairs from $[n]$, oriented from the smaller label to the larger label, write
\[
        x^H=\prod_{ij\in E(H)}x_{ij}.
\]
For $|E(H)|$ odd, the coefficient of $x^H$ in $P_D$ is
\begin{equation}\label{eq:coef-general}
    c_H=2(n-|V(H)|)!\sum_S\sum_\alpha
       \prod_{ij\in E(S)}y_{ij}(-1)^{(ij)_\alpha},
\end{equation}
where $S$ ranges over the copies of $\UG(H)$ in $\UG(D)$, and $\alpha:V(S)\to V(H)$ ranges over the graph isomorphisms from $S$ to $\UG(H)$. For even $|E(H)|$, the coefficient is zero. In particular, if all coefficients $c_H$ vanish, then $P_D$ is the zero polynomial; conversely, every nonzero coefficient gives a tournament witnessing that $D$ is not converse invariant.
\medskip

\begin{figure}[h!]
    \centering
    \includegraphics[width=0.6\linewidth]{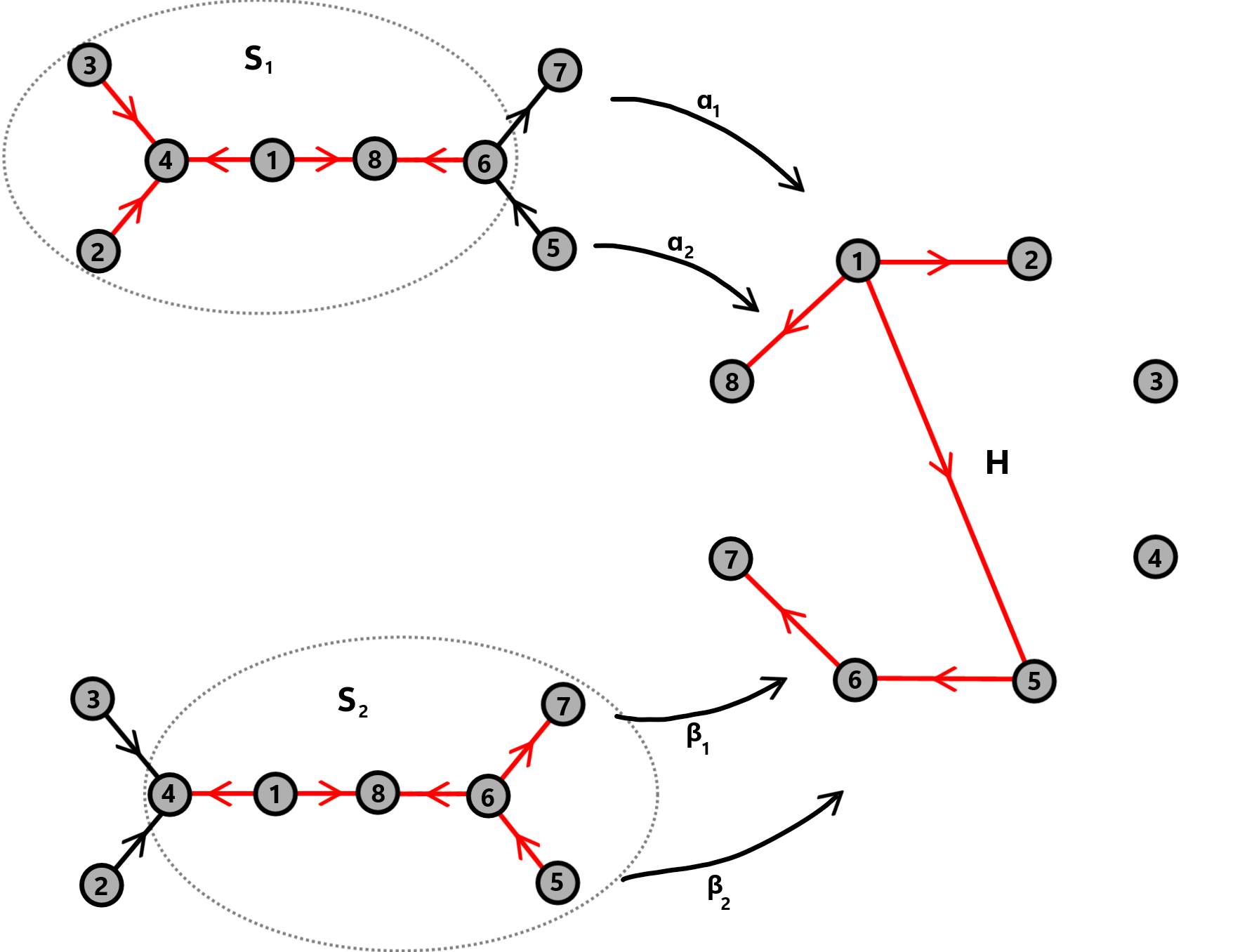}
    \caption{An example of computation of $c_H$}
    \label{fig:examplecoefficient}
\end{figure}

Figure \ref{fig:examplecoefficient} illustrates the computation of the coefficient of $x^H=x_{12}x_{18}x_{15}x_{56}x_{67}$ in the polynomial $P_D$, where $D$ is the oriented tree depicted on the left. Note that $y_{ij}=1$ for every $ij \in E(D)$. There are two copies of $\UG(H)$ in $\UG(D)$, denoted by $S_1$ and $S_2$ and highlighted in red. For each copy, there are two isomorphisms to $\UG(H)$: $\alpha_1:V(S_1) \rightarrow V(H)$, $\alpha_2:V(S_1) \rightarrow V(H)$, where 

\medskip

$\alpha_1=\binom{\ 1 \ 2 \ 3 \ 4 \ 6 \ 8 \ }{\ 5 \ 2 \ 8 \ 1 \ 7 \ 6 \ }$, $\alpha_2=\binom{\ 1 \ 2 \ 3 \ 4 \ 6 \ 8 \ }{\ 5 \ 8 \ 2 \ 1 \ 7 \ 6 \ }$, $\beta_1=\binom{\ 1 \ 4 \ 5 \ 6 \ 7 \ 8 \ }{\ 6 \ 7 \ 8 \ 1 \ 2 \ 8 \ }$ and $\beta_2=\binom{\ 1 \ 4 \ 5 \ 6 \ 7 \ 8 \ }{\ 6 \ 7 \ 8 \ 1 \ 8 \ 2 \ }$.

\medskip

We have $\prod_{ij \in E(S_1)}(-1)^{(ij)_{\alpha_{1}}} = 1$, $\prod_{ij \in E(S_1)}(-1)^{(ij)_{\alpha_{2}}} = 1 $, $\prod_{ij \in E(S_2)}(-1)^{(ij)_{\beta_{1}}} = 1 $, $\prod_{ij \in E(S_2)}(-1)^{(ij)_{\beta_{2}}} = 1 $. Hence, 

$$c_H = 2(n-|V(H)|)!\sum_{S}\sum_{\alpha}\prod_{ij \in E(S)}y_{ij}(-1)^{(ij)_\alpha} = 4(1+1+1+1) = 16.$$

\medskip

Finally, we will need the following simple fact, that one can prove by induction: a multilinear real polynomial $f$ on $m$ variables is identically zero if, and only if, $f(\underline{x}) = 0$ for every $\underline{x} \in \{\pm 1\}^m$. Hence, an oriented graph $D$ is converse invariant if, and only if, every coefficient of $P_D$ is zero.

\section{First results}

In this section we prove two consequences of the coefficient formula. The first is a parity obstruction to converse invariance. The second is a polynomial proof of Theorem \ref{thm:elsahili}.

\begin{theorem}\label{thm:oddoddodd}
    Let $D$ be an oriented graph with underlying graph $G$. Suppose that $G$ has a subgraph $H$ such that
    \begin{enumerate}
        \item $H$ has an odd number of edges;
        \item $|\Aut(H)|$ is odd;
        \item the number of copies of $H$ in $G$ is odd.
    \end{enumerate}
    Then $D$ is not converse invariant. In particular, if $G$ has an odd number of edges and $|\Aut(G)|$ is odd, then $D$ is not converse invariant.
\end{theorem}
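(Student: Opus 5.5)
The plan is to use the coefficient formula \eqref{eq:coef-general}. We exhibit a single coefficient $c_H$ of $P_D$ that is nonzero. Since every coefficient of $P_D$ vanishes when $D$ is converse invariant, this suffices.

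First we fix the labeled graph $H$. We take the given subgraph $H\subseteq G$ and choose any labeling of its vertices by elements of $[n]$, orienting each edge from its smaller to its larger label, so that $x^H$ is a monomial of degree $|E(H)|$. This degree is odd by hypothesis~1, so $c_H$ is given by
\[
c_H=2(n-|V(H)|)!\sum_S\sum_\alpha \prod_{ij\in E(S)}y_{ij}(-1)^{(ij)_\alpha}.
\]
Every summand is a product of signs, hence equals $\pm1$.

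Next we count the summands. The pairs $(S,\alpha)$ consist of a copy $S$ of $\UG(H)$ in $G$ together with an isomorphism $\alpha\colon S\to \UG(H)$. For each copy $S$ there are exactly $|\Aut(H)|$ such isomorphisms, since any two differ by an automorphism of $H$. So the number of summands is $|\Aut(H)|$ times the number of copies of $H$ in $G$. By hypotheses~2 and~3 this number is odd.

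A sum of an odd number of terms each equal to $\pm1$ is odd, so it is nonzero. Hence $c_H\neq 0$, so $P_D\not\equiv 0$, and by the remarks after \eqref{eq:coef-general} there is a tournament $T$ with $f_T(D)\neq f_T(\overline D)$.

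For the ``in particular'' statement we take $H=G$. Then $G$ has exactly one copy of itself, so hypothesis~3 holds automatically, and hypotheses~1 and~2 are exactly the assumptions.

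The only point needing care is the count of pairs $(S,\alpha)$: we must check that isomorphisms $S\to\UG(H)$ form a torsor under $\Aut(H)$. This requires that ``copies'' means subgraphs rather than labeled embeddings, which matches the convention in the formula. There is no real obstacle beyond this.
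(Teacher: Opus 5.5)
Your proposal is correct and follows the paper's proof essentially verbatim. Like the paper, you fix a labeled copy of $H$ and note that the coefficient of $x^H$ in $P_D$ is a nonzero multiple of a sum of $\pm1$ terms indexed by pairs $(S,\alpha)$; there are $|\Aut(H)|$ times the number of copies of such terms, which is odd, and your explicit check of the ``in particular'' clause via $H=G$ is a small addition the paper leaves implicit.
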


\begin{proof}
    Let $H'\subseteq K_n$ be any labeled copy of $H$, with each edge oriented from the smaller label to the larger label. By \eqref{eq:coef-general}, the coefficient of $x^{H'}$ in $P_D$ is
    \[
        2(n-|V(H')|)!\sum_S\sum_\alpha
        \prod_{ij\in E(S)}y_{ij}(-1)^{(ij)_\alpha},
    \]
    where $S$ ranges over the copies of $H$ in $G$ and $\alpha:S\to H'$ ranges over graph isomorphisms. Each summand is $+1$ or $-1$. The number of summands is
    \[
        \#\{\text{copies of }H\text{ in }G\}\cdot |\Aut(H)|,
    \]
    which is odd by assumption. A sum of an odd number of signs cannot be zero, so this coefficient is nonzero. Hence $P_D$ is not the zero polynomial, and $D$ is not converse invariant.
\end{proof}

\begin{theorem}\label{thm:paths-cycles}
    Oriented paths and oriented cycles are converse invariant.
\end{theorem}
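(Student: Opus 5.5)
The plan is to show that every coefficient $c_H$ in \eqref{eq:coef-general} vanishes. By the remark closing Section~\ref{sec:prelim}, this gives $P_D\equiv 0$, so $D$ is converse invariant. Fix $H$ with $|E(H)|$ odd, labeled on $[n]$ with edges oriented from smaller to larger label. I will build a sign-reversing, fixed-point-free involution on the pairs $(S,\alpha)$ that index the sum. Here $S$ is a copy of $\UG(H)$ in $\UG(D)$ and $\alpha$ is an isomorphism onto $\UG(H)$.

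\textbf{Step 1: rewrite each summand locally.} For an edge $ij\in E(S)$, the factor $y_{ij}(-1)^{(ij)_\alpha}$ equals $+1$ exactly when the arc of $D$ on $ij$ and the arc of $H$ on $\alpha(i)\alpha(j)$ agree under $\alpha$. The relevant subgraphs $\UG(H)$ are disjoint unions of paths, or, for cycles, possibly the whole cycle. Take a component $Q$ of $S$ that is a path with $m$ edges $s_1,\dots,s_m$ in traversal order. Then $\alpha(Q)$ is a path of $H$ with edges $h_t=\alpha(s_t)$, traversed in the induced order. Let $a_t=\pm1$ record whether the $D$-arc $s_t$ points forward along this traversal. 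Let $b_t=\pm1$ record whether the $H$-arc $h_t$ points forward along the image traversal. Then the agreement factor of $s_t$ is $a_tb_t$, so the contribution of $Q$ is $\prod_t a_t\prod_t b_t$. The analogous formula holds for a cycle component.

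\textbf{Step 2: the involution.} Since $|E(H)|$ is odd, $\UG(H)$ has a component with an odd number of edges. Choose one canonically, for instance the odd-edge component of $\UG(H)$ containing the smallest label. This choice depends only on $H$, not on $(S,\alpha)$.

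First suppose this component is a path $R$. Let $\sigma$ be the automorphism of $\UG(H)$ that reverses $R$ and fixes everything else. Map $(S,\alpha)\mapsto(S,\sigma\circ\alpha)$. This is an involution, and it has no fixed points because $R$ has at least one edge. The factors of all other components are unchanged. On the component $Q=\alpha^{-1}(R)$, the sequence $(a_t)$ stays the same. The image traversal of $R$ is now reversed, so every $b_t$ flips sign. Hence the contribution changes by the factor $(-1)^{|E(R)|}=-1$.

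Now suppose the chosen component is the whole cycle, i.e.\ $D$ is an odd cycle and $H\cong C_n$. Compose $\alpha$ with a reflection of the cycle $H$. Traversing the cycle, each of the $n$ values $b_t$ flips, which gives the factor $(-1)^n=-1$. Again there are no fixed points. In both cases the summands cancel in pairs, so $c_H=0$.

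\textbf{Main obstacle.} The delicate point is Step 1: checking carefully that the sign $y_{ij}(-1)^{(ij)_\alpha}$ really factors as the product of a $D$-only orientation term and an $H$-only orientation term along a traversal. This is what makes reversing an odd-length component produce exactly $(-1)^{\text{odd}}$. I will verify it edge by edge: both $(ij)_\alpha$ and $y_{ij}$ compare a direction against the label order, and these label-order comparisons cancel in the product. Once this factorization is established, the rest is bookkeeping.
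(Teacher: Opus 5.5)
Your proposal is correct and follows essentially the same route as the paper: you show that every $c_H$ vanishes by pairing $\alpha$ with $\sigma\circ\alpha$, where $\sigma$ reflects a fixed odd path component of $H$ (or the whole odd cycle), and you check that the sign flips.

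The only difference is in the bookkeeping. The paper pairs edges symmetric about the middle edge (or about the reflection axis) and observes that only the fixed edge changes sign. Your factorization $a_tb_t$ along a traversal gives the factor $(-1)^{|E(R)|}$ directly. Strictly speaking, with your indexing $b'_t=-b_{m+1-t}$ rather than $-b_t$, but only the product matters. Your version also makes it evident that any reflection of the odd cycle works.
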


\begin{proof}
    By the coefficient criterion, it is enough to prove that every coefficient \eqref{eq:coef-general} is zero. We first consider the case where $D$ is an oriented path. Since $D$ is acyclic, we take a topological labeling and hence $y_{ij}=1$ for every $ij\in E(D)$.

    Let $H$ be a subgraph of $\UG(D)$ with an odd number of edges. Then $H$ is a disjoint union of paths, and at least one component, say $Q$, has an odd number of edges. Let $\rho$ be the automorphism of $H$ that reflects $Q$ and fixes every vertex outside $Q$. For every copy $S$ of $H$ in $\UG(D)$ and every isomorphism $\alpha:S\to H$, pair $\alpha$ with $\rho\circ\alpha$. This pairing has no fixed point.

    We claim that the two paired isomorphisms contribute opposite signs to \eqref{eq:coef-general}. Outside the component $\alpha^{-1}(Q)$, the two products are identical. On $Q$, write the vertices in path order as $v_1,\dots,v_{2r}$, so that $Q$ has $2r-1$ edges. The reflection pairs all edges except the middle edge $v_rv_{r+1}$; the paired edges contribute the same product before and after applying the reflection, while the middle edge is reversed. Thus
    \[
        \prod_{ij\in E(S)}(-1)^{(ij)_\alpha}
        =-
        \prod_{ij\in E(S)}(-1)^{(ij)_{\rho\circ\alpha}}.
    \]
    Hence the contributions cancel in pairs, and every coefficient is zero.

    The proof for cycles is the same, except for one case. If $H$ is a proper subgraph of a cycle, then $H$ is a disjoint union of paths, so the preceding argument applies to a path component with an odd number of edges. If $H$ is the whole cycle, then $|E(H)|$ must be odd. Pair each isomorphism with a reflection of the odd cycle whose axis passes through one vertex and the midpoint of the opposite edge. This reflection fixes exactly one edge setwise and reverses its endpoints, while all other edges are paired. Therefore the sign changes by a factor of $-1$, and the contributions again cancel. The factor $\prod_{ij\in E(S)}y_{ij}$ is independent of $\alpha$, so the argument applies to every orientation of the cycle.
\end{proof}

\section{Trees}\label{sec:trees}

In this section, we will deal with theorems concerning trees. As they do not contain cycles, every orientation of a tree corresponds to a labeling and vice versa in the natural way, i.e., the edges point towards the larger label.

\subsection{Lemmas}\label{subsec:lemmas}

In this subsection, we will prove some relevant lemmas that will be used in subsections \ref{subsec:trees3} and \ref{subsec:trees4}. We start with a definition:

\begin{definition}
    Let $S$ and $T$ be two labeled isomorphic graphs, and $\alpha:V(S) \rightarrow V(T)$ be an isomorphism. We define the \emph{sign of $\alpha$ along the edges of $S$} as

    $$
    \sgn_S(\alpha) = \prod_{ij \in E(S)}(-1)^{(ij)_\alpha}.
    $$
\end{definition}

\begin{lemma}\label{lem:constantsignstar}
    Let $S$ and $T$ be labeled stars with $|V(S)|=|V(T)|\geq 3$. Then $\sgn_S(\alpha)$ is the same for every isomorphism $\alpha:S\to T$.
\end{lemma}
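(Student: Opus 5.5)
Since $|V(S)|\geq 3$, both stars have a unique center, and every isomorphism $\alpha:S\to T$ sends the center $c$ of $S$ to the center $d$ of $T$. The leaves of $S$ are mapped bijectively onto the leaves of $T$. The plan is to compute $\sgn_S(\alpha)$ explicitly and check that it does not depend on the choice of $\alpha$.

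Every edge of $S$ has the form $c\ell$ with $\ell$ a leaf, so $(c\ell)_\alpha$ only records whether the order of the endpoints is preserved. With the edge written as $ij$, $i<j$, we have $(ij)_\alpha=1$ exactly when $\alpha(i)>\alpha(j)$. Thus
\[
  (c\ell)_\alpha = [\,\ell<c\,] + [\,\alpha(\ell)<d\,] \pmod 2 ,
\]
where $[\cdot]$ denotes the indicator. Indeed, the edge is inverted exactly when the relative position of the leaf with respect to the center changes.

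Multiplying over all leaves gives
\[
  \sgn_S(\alpha) = (-1)^{\#\{\ell\in L(S):\,\ell<c\}}\cdot(-1)^{\#\{\ell\in L(S):\,\alpha(\ell)<d\}} .
\]
Since $\alpha$ is a bijection from the leaves of $S$ onto the leaves of $T$, the second exponent equals $\#\{m\in L(T): m<d\}$. This depends only on the labelings of $S$ and $T$, not on $\alpha$. Hence $\sgn_S(\alpha)$ is the same for every isomorphism $\alpha$.

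There is no real obstacle. The only care needed is the hypothesis $|V(S)|\geq 3$: it guarantees that the center is unique, so $\alpha(c)=d$ is forced. For $K_2$ the swap of the two endpoints would flip the sign.
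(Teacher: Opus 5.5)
Your proof is correct, but it takes a different route from the paper. The paper notes that any two isomorphisms differ by a permutation of the leaves and reduces to a single transposition of two leaf-images $p<q$. It then checks that the two affected edge factors are both unchanged when $p,q$ lie on the same side of the center of $T$, and both flip when they straddle it. You instead compute the sign in closed form,
\[
\sgn_S(\alpha)=(-1)^{\#\{\ell\in L(S):\,\ell<c\}+\#\{m\in L(T):\,m<d\}},
\]
which makes independence from $\alpha$ immediate. It also avoids the generation-by-transpositions step and states explicitly the fact $\alpha(c)=d$, which the paper uses tacitly.

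Your version also identifies the value of the sign, not just its constancy. When the labels are induced by an orientation, the first exponent is exactly the number of edges directed into the center. So your formula gives Corollary~\ref{cor:signout} for stars at once. It also holds for any center-preserving bijection of leaves, so it applies verbatim to the parent--child star in the induction of Lemma~\ref{lem:constantsigneventree}, even when the root has only one child. The paper's swap argument is more local and matches the "permute isomorphic children" way the lemma is used later. For this statement, both proofs are equally short and rigorous.
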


\begin{proof}
    Any two isomorphisms $S\to T$ differ by a permutation of the leaves of $S$. Since such permutations are generated by transpositions, it is enough to compare $\alpha$ with the map obtained from $\alpha$ by swapping the images of two leaves.

    Let $c$ be the label of the center of $T$, and let $p<q$ be the two leaf-images being swapped. All factors in $\sgn_S(\alpha)$ are unchanged except possibly those corresponding to the two leaf-edges whose images are incident with $p$ and $q$. If $p$ and $q$ lie on the same side of $c$, neither of these two factors changes. If $p<c<q$, then both factors change sign. In both cases their product is unchanged. Hence $\sgn_S(\alpha)$ is independent of $\alpha$.
\end{proof}

\begin{definition}
    A \emph{rooted} tree $T$ is a tree with a distinguished vertex $r\in V(T)$, called the \emph{root}. A \emph{rooted isomorphism} between rooted trees is an isomorphism that maps the root of the first tree to the root of the second. For a vertex $x$ in a rooted tree, $R(x)$ denotes the subtree induced by $x$ and its descendants. Finally, $T_\out$ denotes the orientation of the rooted tree $T$ in which every edge points away from the root.
\end{definition}

\begin{lemma}\label{lem:constantsigneventree}
    Let $S$ and $T$ be isomorphic rooted labeled trees. Then $\sgn_S(\alpha)$ is the same for every rooted isomorphism $\alpha:S\to T$. In particular, if $S$ and $T$ are isomorphic trees of even diameter, then $\sgn_S(\alpha)$ is the same for every isomorphism $\alpha:S\to T$.
\end{lemma}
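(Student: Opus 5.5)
We prove the first statement by induction on the height of the rooted tree $S$ (equivalently, on $|V(S)|$), following the transposition argument of Lemma~\ref{lem:constantsignstar}. First we describe the structure of rooted isomorphisms. Let $r$ be the root of $S$ and $r'$ the root of $T$. A rooted isomorphism $\alpha$ maps $r$ to $r'$. It then induces a bijection between the children of $r$ and those of $r'$, and for each child $x$ a rooted isomorphism $R(x)\to R(\alpha(x))$.

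The sign factors along this decomposition:
\[
\sgn_S(\alpha)=\prod_{x \text{ child of } r}(-1)^{(rx)_\alpha}\cdot \prod_{x}\sgn_{R(x)}\bigl(\alpha|_{R(x)}\bigr).
\]
By induction, each factor $\sgn_{R(x)}(\alpha|_{R(x)})$ depends only on the labeled rooted trees $R(x)$ and $R(\alpha(x))$. It does not depend on which rooted isomorphism between them is chosen.

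Now fix two rooted isomorphisms $\alpha,\beta$. They may assign the children of $r$ to the children of $r'$ differently, but only among children with isomorphic subtrees. So they differ by a permutation of the children of $r'$ that preserves rooted isomorphism types of their subtrees. Such a permutation is a product of transpositions swapping two children $p,q$ of $r'$ with $R(p)\cong R(q)$. It therefore suffices to compare $\alpha$ with $\alpha'$, obtained by composing $\alpha$ with a rooted automorphism of $T$ that swaps the subtrees $R(p)$ and $R(q)$ via some rooted isomorphism and fixes everything else.

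We compare the factors for the two children $x,y$ of $r$ with $\alpha(x)=p$ and $\alpha(y)=q$.
\begin{enumerate}
\item The root-edge factors: exactly as in Lemma~\ref{lem:constantsignstar}, the product $(-1)^{(rx)_\alpha}(-1)^{(ry)_\alpha}$ is unchanged when the images $p$ and $q$ are swapped. Both edges change sign if $p<r'<q$, and neither changes otherwise.
\item The subtree factors: the product $\sgn_{R(x)}\cdot\sgn_{R(y)}$ is also unchanged. By the induction hypothesis each factor depends only on the target subtree, and this is the step needing care. Let $s(A,B)$ denote the common sign of rooted isomorphisms from $A$ to $B$. We need $s(R(x),R(p))\,s(R(y),R(q))=s(R(x),R(q))\,s(R(y),R(p))$. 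Composing isomorphisms shows that $s$ is multiplicative: $s(A,C)=s(A,B)\,s(B,C)$, because $(-1)^{(ij)_{\beta\alpha}}=(-1)^{(ij)_\alpha}(-1)^{(\alpha(i)\alpha(j))_\beta}$ edge by edge. Since $s(A,B)=s(B,A)$ and $s^2=1$, both sides equal $s(R(x),R(p))\,s(R(p),R(q))\,s(R(y),R(p))\cdot s(R(p),R(q))$ up to rearrangement, and hence they agree.
\end{enumerate}

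In fact, this multiplicativity makes the whole argument cleaner. By the cocycle identity, it suffices to prove that $\sgn_T(\gamma)=1$ for every rooted automorphism $\gamma$ of the labeled tree $T$. That reduces the proof to the swap-of-two-isomorphic-branches case plus induction, which is the main obstacle.

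For the "in particular" statement, let $S$ and $T$ have even diameter $2k$. Every isomorphism maps the unique central vertex of $S$ to the unique central vertex of $T$. Rooting both trees at their centers therefore makes every isomorphism a rooted isomorphism, and the first part applies.
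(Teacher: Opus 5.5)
Your proof is correct and has the same skeleton as the paper's. You split $\sgn_S(\alpha)$ into the root--child edges, handled by the transposition argument of Lemma~\ref{lem:constantsignstar}, and the edges inside the child subtrees, handled by induction. You then root at the unique center for the even-diameter case.

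The one substantive difference is in your favour. The paper checks only two things: that the root--child star is unaffected by permuting isomorphic child subtrees, and that, for a \emph{fixed} matching of the children, the inner signs do not depend on the chosen subtree isomorphisms. It never addresses that re-matching (sending $R(x)\to R(q)$, $R(y)\to R(p)$ instead of $R(x)\to R(p)$, $R(y)\to R(q)$) changes which inner signs occur. So it leaves unproved that $s(R(x),R(p))\,s(R(y),R(q))=s(R(x),R(q))\,s(R(y),R(p))$. Your composition rule $\sgn_A(\beta\circ\alpha)=\sgn_A(\alpha)\,\sgn_B(\beta)$, with its consequences $s(A,C)=s(A,B)\,s(B,C)$ and $s(A,B)=s(B,A)$, is exactly what closes this step.

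There is one small slip. Both sides of that identity reduce to $s(R(x),R(p))\,s(R(y),R(p))\,s(R(p),R(q))$, whereas the common value you display carries an extra factor $s(R(p),R(q))$.

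Your closing reduction to rooted automorphisms $\gamma$ of $T$ is the cleanest form of the argument, and it needs no transposition decomposition:
\begin{itemize}
\item The root edges give $\prod_p \mathrm{sign}\bigl(\gamma(p)-r'\bigr)\,\mathrm{sign}(p-r')=1$ directly, because $\gamma$ permutes the children $p$ of $r'$.
\item Within each class, factor each $s\bigl(R(p),R(\gamma(p))\bigr)$ through a fixed representative $R_0$; the inner signs then multiply to $\prod_p s(R(p),R_0)^2=1$.
\end{itemize}
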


\begin{proof}
    We prove the first statement by induction on $|V(S)|$. The cases of one edge and of rooted stars follow from the definition and Lemma \ref{lem:constantsignstar}.

    Let $r_S$ and $r_T$ be the roots. The children of $r_S$ split into classes according to the rooted isomorphism type of the subtrees below them; the same is true for $r_T$. A rooted isomorphism is obtained by, independently for each class, matching the corresponding child-subtrees of $r_S$ with those of $r_T$, and then choosing rooted isomorphisms inside the matched subtrees.

    By the induction hypothesis, the contribution from the edges inside each matched child-subtree is independent of the chosen rooted isomorphism inside that subtree. It remains to check that permuting isomorphic child-subtrees within one class does not change the contribution of the edges incident with the parent. But these parent--child edges form a star, and Lemma \ref{lem:constantsignstar} shows that their contribution is invariant under such permutations. Therefore the full product defining $\sgn_S(\alpha)$ is independent of $\alpha$.

    If a tree has even diameter, then it has a unique central vertex. Every isomorphism between two such trees maps the center to the center, and hence is a rooted isomorphism after rooting both trees at their centers. This proves the second statement.
\end{proof}

\begin{corollary}\label{cor:signout}
    Let $S$ be a rooted oriented tree, with a labeling induced by its orientation. Then
    \[
        \sgn_S(\alpha)=(-1)^r
    \]
    for every rooted isomorphism $\alpha:S\to S_\out$, where $r$ is the number of edges of $S$ directed towards the root. In particular, if $S_1$ and $S_2$ are isomorphic rooted oriented trees and $\alpha:S_1\to S_\out$, $\beta:S_2\to S_\out$ are rooted isomorphisms, then $\sgn_{S_1}(\alpha)=\sgn_{S_2}(\beta)$ if the corresponding numbers of inward edges have the same parity, and the two signs are opposite otherwise.
\end{corollary}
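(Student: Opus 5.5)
By Lemma~\ref{lem:constantsigneventree}, $\sgn_S(\alpha)$ takes the same value for every rooted isomorphism $\alpha:S\to S_\out$. So it is enough to compute the sign for one convenient $\alpha$. Both statements then follow, with the second a direct consequence of the first.

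Fix a labeling of $S_\out$ compatible with its orientation, so that every edge of $S_\out$ goes from the smaller label to the larger. Every edge of $S_\out$ points away from the root, so for each edge the endpoint nearer the root has the smaller label in $S_\out$. Now take any rooted isomorphism $\alpha:S\to S_\out$ and an edge $ij$ of $S$. Because $\alpha$ preserves the root and adjacency, it maps the endpoint of $ij$ closer to the root of $S$ to the endpoint of $\alpha(i)\alpha(j)$ closer to the root of $S_\out$. Let $i$ be the endpoint of $ij$ closer to the root.

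If the edge is directed away from the root in $S$, then $i<j$ in the labeling of $S$, and also $\alpha(i)<\alpha(j)$, so it contributes $(-1)^0=1$. If the edge is directed towards the root, then $j<i$, while $\alpha(i)<\alpha(j)$. In that case the smaller endpoint $j$ is sent to the larger label, which gives a factor $-1$. (Here $(ij)_\alpha$ is read with $ij$ written in increasing order, as in the definition of $P_D$.) Multiplying over all edges gives $\sgn_S(\alpha)=(-1)^r$.

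For the second statement, apply the first part to $S_1$ and to $S_2$ separately; both are compared with the same $S_\out$. This gives $\sgn_{S_1}(\alpha)=(-1)^{r_1}$ and $\sgn_{S_2}(\beta)=(-1)^{r_2}$. These are equal exactly when $r_1\equiv r_2 \pmod 2$ and opposite otherwise.

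The only delicate point is the convention for $(ij)_\alpha$: one has to check that it is taken with respect to the increasing order of the labels of $S$. The relevant labels are the ones induced by the orientation, so this matches the definition, and no other obstacle is expected.
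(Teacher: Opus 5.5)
Your proof is correct and is essentially the paper's argument: an edge contributes $-1$ exactly when its direction in $S$ is opposite to its direction in $S_\out$, i.e.\ when it points towards the root, and the second assertion follows by applying this formula to $S_1$ and $S_2$. Your edge-by-edge computation already holds for an arbitrary rooted isomorphism, so the appeal to Lemma~\ref{lem:constantsigneventree}, which the paper also makes, is not actually needed.
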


\begin{proof}
    Consider first one rooted isomorphism $\alpha:S\to S_\out$. Since the labels of both trees are induced by their orientations, an edge contributes $-1$ precisely when its direction in $S$ is opposite to its direction in $S_\out$, i.e., precisely when it is directed towards the root. Thus the product is $(-1)^r$. Lemma \ref{lem:constantsigneventree} shows that the value does not depend on the chosen rooted isomorphism. The final assertion follows by applying this formula to $S_1$ and $S_2$.
\end{proof}

\subsection{Trees of diameter at most $3$}\label{subsec:trees3}

In this subsection, we consider trees of diameter two and three. In particular, we give another proof of Theorem \ref{thm:ai}.

\begin{proposition}\label{prop:stars}
    A star is converse invariant if and only if it is self-converse or a path.
\end{proposition}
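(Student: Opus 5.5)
The "if" direction is immediate. Self-converse graphs are trivially converse invariant, and stars that are paths (at most two leaves) are covered by Theorem~\ref{thm:paths-cycles}. For the converse, let $D$ be an oriented star with center $c$, $a$ out-leaves and $b$ in-leaves, where $n-1=a+b\geq 3$. Since $\overline D$ has $b$ out-leaves and $a$ in-leaves, $D$ is self-converse exactly when $a=b$. So we assume $a\neq b$ and $a+b\geq 3$, and we look for a nonzero coefficient of $P_D$.

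We use the coefficient formula \eqref{eq:coef-general} with $H$ the whole star $K_{1,m}$, where $m=a+b$, in the case $m$ odd. Then $\UG(D)$ has exactly one copy $S$ of $H$, namely itself. Since $|V(H)|=m+1\geq 3$, Lemma~\ref{lem:constantsignstar} shows that $\sgn_S(\alpha)$ is the same for all $m!$ isomorphisms $\alpha$. Hence $c_H=\pm 2\, m!\neq 0$, and $D$ is not converse invariant. This settles every star with an odd number of edges, including the case $a=b$, which cannot occur for odd $m$. It is also just the special case of Theorem~\ref{thm:oddoddodd} with $H=G$, once one notes that the constant-sign lemma replaces the parity requirement on $|\Aut(H)|$.

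For even $m$ we take $H$ to be a substar $K_{1,k}$ with $k$ odd. The copies of $H$ in $\UG(D)$ are the $\binom{m}{k}$ choices of $k$ leaves. Fix a labeled target $H'$ whose center has label $t$ and whose leaves have labels both below and above $t$. Pick one isomorphism $\alpha_0$ for each copy $S$. We use the topological labeling of $D$, so each edge has $y=1$, and an in-leaf receives a smaller label than $c$ while an out-leaf receives a larger one. Then $\sgn_S(\alpha_0)$ is a product of factors $(-1)^{(ij)_{\alpha_0}}$, one per edge. The factor for an edge is $-1$ exactly when the edge is sent to the "wrong side" of $t$.

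If $H'$ has $p$ leaves below $t$, this sign equals $(-1)^{s}$, where $s$ counts the in-leaves of $S$ that are mapped above $t$. Equivalently, up to a global sign, it is $(-1)^{\#\text{in-leaves of }S}$ times a fixed sign determined by $p$. Therefore
\[
c_{H'}=\pm 2(n-k-1)!\,k!\sum_{j}\binom{b}{j}\binom{a}{k-j}(-1)^{j},
\]
which is, up to sign, the coefficient of $z^k$ in $(1-z)^b(1+z)^a$.

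It remains to choose an odd $k\leq m$ for which this Krawtchouk-type coefficient is nonzero whenever $a\neq b$. The polynomial $(1-z)^b(1+z)^a$ has a nonzero odd part exactly when it is not an even function. Since $(1-z)^b(1+z)^a$ is even if and only if it equals $(1+z)^b(1-z)^a$, this happens if and only if $a\neq b$. So some odd coefficient is nonzero, which gives the required odd $k$. In fact this argument also covers odd $m$, so it handles all cases at once.

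The main obstacle is checking the sign computation carefully, namely that $\sgn$ depends only on the number of in-leaves chosen. This again rests on Lemma~\ref{lem:constantsignstar} together with the side-of-center description of the sign factors.
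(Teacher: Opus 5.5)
Your odd-$m$ case is exactly the paper's argument. Your sign computation for a substar $K_{1,k}$ is also correct, but only when $k\ge 3$. (A small slip first: the sign is not $(-1)^s$ alone. Out-leaves sent below $t$ also flip, so the exponent is $2s+p-j$. Your ``equivalently'' formula $(-1)^{p+j}$ is nonetheless right.)

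The genuine gap is the last step, the choice of $k$. Your formula
\[
c_{H'}=\pm 2(n-k-1)!\,k!\,[z^k](1-z)^b(1+z)^a
\]
assumes every isomorphism fixes the center. It also uses Lemma~\ref{lem:constantsignstar}, which needs at least three vertices, i.e.\ $k\ge 2$. For $k=1$ the target $H'$ is a single edge, and each copy admits two isomorphisms (center and leaf swapped) with opposite signs. So $c_{H'}=0$ for every $D$, whereas your formula would predict $\pm 2(n-2)!\,(a-b)$.

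Now, the coefficient of $z^1$ in $(1-z)^b(1+z)^a$ is precisely $a-b$. So your even/odd-function argument (``some odd coefficient is nonzero'') is already witnessed by $k=1$. It gives no information about whether some odd $k\ge 3$ has a nonzero coefficient. As written, the argument may be selecting exactly the one odd $k$ for which your coefficient formula is false.

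The repair is to choose $k$ explicitly. For even $m\ge 4$, take $k=m-1\ge 3$. Only $j=b-1$ and $j=b$ contribute, giving
\[
[z^{m-1}](1-z)^b(1+z)^a=(-1)^b(a-b)\neq 0 .
\]
This is precisely the paper's choice: delete one out-edge of the center, so that $c_H$ is a nonzero multiple of $a-b$. For odd $m$, $k=m$ gives leading coefficient $(-1)^b$, which is your first case.

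With this fix your proof coincides with the paper's. Your version adds one thing: it identifies the coefficient as the Krawtchouk value $K_k^{(m)}(b)$, the same polynomials used in Section~\ref{subsec:trees4}.
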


\begin{proof}
    The path case follows from Theorem \ref{thm:paths-cycles}. Let $D$ be an orientation of a star that is not a path, let $u$ be its center, and let $a$ and $b$ be the indegree and outdegree of $u$, respectively. Thus $a+b\geq 3$.

    First suppose that $a+b$ is odd. Then $D$ has an odd number of edges. For the monomial $x^D$, formula \eqref{eq:coef-general} gives
    \[
        c_D=2\sum_{\alpha\in\Aut(D)}\sgn_D(\alpha).
    \]
    By Lemma \ref{lem:constantsignstar}, all terms in this sum have the same sign, so $c_D\neq 0$. Hence a converse invariant star must have $a+b$ even.

    Now assume $a+b$ is even. If necessary, replace $D$ by its converse, so that $b>0$. Let $H$ be the star obtained from $D$ by deleting one out-edge of the center. The copies of $H$ in $\UG(D)$ are obtained by deleting one leaf-edge of $D$. For each fixed copy, Lemma \ref{lem:constantsignstar} makes the inner signed sum constant. Comparing with the identity embedding shows that deleting an in-edge and deleting an out-edge contribute opposite signs. Consequently $c_H$ is a nonzero constant multiple of $a-b$. Since $D$ is converse invariant, $c_H=0$, and therefore $a=b$. This is exactly the condition that the oriented star be self-converse.
\end{proof}

We now give a polynomial proof of Theorem \ref{thm:ai}.

\begin{proof}[Proof of Theorem \ref{thm:ai}]
    By Proposition \ref{prop:stars}, it remains to consider orientations of double stars. Let $u$ and $v$ be the two non-leaf vertices and orient the central edge as $u\to v$. Let $a$ and $b$ be the numbers of in-leaves and out-leaves at $u$, respectively, and let $d$ and $c$ be the numbers of in-leaves and out-leaves at $v$, respectively. Put $p=a+b$ and $q=c+d$.

    We first show that $p=q$. Suppose $p\neq q$. If $p+q$ is odd, let $H$ be the forest obtained from $D$ by deleting the central edge $uv$. Then $H$ has an odd number of edges, and, because the two components have different sizes, it has a unique copy in $\UG(D)$. The signed automorphism sum is nonzero by Lemma \ref{lem:constantsignstar} applied to the two star components, so $c_H\neq 0$. If $p+q$ is even, then $D$ has an odd number of edges. Since the two inner vertices have different degrees, every automorphism fixes them, and the same constant-sign argument gives $c_D\neq0$. In either case $D$ is not converse invariant, a contradiction. Hence $p=q$.

    If $p=1$, the underlying tree is a path, which is excluded. Assume first that $p\geq 3$. After possibly replacing $D$ by its converse and swapping the names of $u$ and $v$, we may suppose that $a>0$. Let $H$ be obtained from $D$ by deleting the central edge and one in-edge incident with $u$. The copies of $H$ in $\UG(D)$ are obtained by deleting the central edge and one arbitrary leaf-edge. Since $p\geq3$, the two components of $H$ are stars of distinct sizes at least $2$, and Lemma \ref{lem:constantsignstar} makes each inner automorphism sum constant. A direct comparison with the identity embedding gives
    \[
        c_H=C(a-b-c+d)
    \]
    for some nonzero constant $C$ depending only on $p$ and on the labels. Since $c_H=0$ and $a+b=c+d$, we obtain $a=c$ and $b=d$. Thus $D$ is self-converse.

    It remains to handle $p=2$. Up to taking the converse and interchanging $u$ and $v$, the double stars with two leaves at each inner vertex are either self-converse, the exceptional bridge-mirrored tree in Figure \ref{fig:bmex}, or the case
    \[
        (a,b,c,d)=(2,0,1,1).
    \]
    In this last case, let $H$ be the oriented star obtained by deleting from $D$ the two leaf-edges incident with $v$. The only copies of $\UG(H)$ in $\UG(D)$ are the stars centered at $u$ and at $v$. For both copies, the signed automorphism sum has the same sign by Lemma \ref{lem:constantsignstar} (equivalently, by Corollary \ref{cor:signout}, the two stars have the same parity of inward edges). Hence $c_H\neq0$, so this remaining case is not converse invariant. Therefore the only converse invariant double stars are the self-converse ones and the exceptional bridge-mirrored tree.
\end{proof}

\subsection{Trees of diameter $4$}\label{subsec:trees4}

In this subsection, $T$ will denote a converse invariant oriented tree of diameter four, with central vertex $u$.  We first prove a parity lemma valid for all such trees.  We then prove the classification (Theorem \ref{thm:diam4classification}) in two parts, according to whether $u$ has at least four non-leaf neighbors. 

Let $S_t$ ($t \geq 0$) denote the set of neighbors of $u$ with degree exactly $t+1$ (i.e. that have $t$ children leaves), and $T_t$ denote the subtree of $T$ induced by $u$, $S_t$ and the leaves connected to the vertices of $S_t$.  For $0 \leq l \leq t$, denote by $A_l^t$ the set of out-neighbors of $u$ in $S_t$ with exactly $l$ leaves pointing towards them, and $a_l^t=|A_l^t|$.  Symmetrically, let $B_l^t$ denote the set of in-neighbors of $u$ in $S_t$ with exactly $l$ leaves pointing from them, and $b_l^t=|B_l^t|$.  In particular, $T_t$ being self-converse is equivalent to $a_l^t=b_l^t$ for every $l \in \{0,\dots,t\}$, and $T$ is self-converse if and only if this condition holds for every $t$.

We write
\[
        p(T)=\sum_{t\geq 1}|S_t|.
\]
Thus $p(T)$ is the number of neighbors of the central vertex which are not leaves. 

\begin{lemma}\label{lem:evennumberofedges}
Let $T$ be a converse invariant orientation of a tree of diameter four,
with center $u$. Then $|S_t|$ is even for every $t\geq 0$. In particular,
$T$ has an even number of edges.
\end{lemma}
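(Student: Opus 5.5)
The plan is to use the coefficient formula \eqref{eq:coef-general} with subgraphs $H$ that contain the central vertex. Since such an $H$ still has diameter four, every copy of it in $\UG(T)$ must place the center of $H$ at $u$. By Lemma~\ref{lem:constantsigneventree}, the inner sum $\sum_\alpha \sgn_S(\alpha)$ for each copy $S$ is then $\pm|\Aut(H)|$. By Corollary~\ref{cor:signout}, its sign is determined by the parity of the number of edges of $S$ directed towards $u$.

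The first step is the "in particular" statement. If $e(T)$ were odd, take $H=\UG(T)$ itself: it has a unique copy, so $c_H=\pm 2|\Aut(T)|\neq 0$, contradicting converse invariance.

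Next I treat the sets $S_t$ by descending induction on $t$, starting from the largest $t$ with $S_t\neq\emptyset$. For $t\geq 1$ let
\[
H_t=\{u\}\cup\bigcup_{s\geq t}T_s .
\]
Any copy of $H_t$ must send $u$ to $u$, and must send each child with $s$ leaves to a neighbor of $u$ having at least $s$ leaves. Counting leaves from the top level down, the only such neighbors are those in $S_{\geq t}$, so $H_t$ has a unique copy in $\UG(T)$. Hence $c_{H_t}=\pm 2(n-|V(H_t)|)!\,|\Aut(H_t)|\neq 0$ whenever
\[
e(H_t)=\sum_{s\geq t}|S_s|(s+1)
\]
is odd. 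Converse invariance forces $e(H_t)$ to be even for every $t\geq1$, and by induction this gives $|S_t|(t+1)$ even. This settles every even $t$. Once all $|S_t|$ with $t\geq1$ are known to be even, $|S_0|$ is even as well, because $e(T)=\sum_t |S_t|(t+1)$ is even.

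The main obstacle is odd $t$, where $t+1$ is even and $e(H_t)$ gives no information. For these I would use a modified subgraph $H'_t$: replace one branch of $T_t$ by a branch with only $t-1$ leaves (or, alternatively, append a single leaf edge at the root), so that the edge count shifts by $\pm1$ relative to $e(H_t)$ and becomes odd. Now copies are no longer unique. They are indexed by the choice of the truncated vertex $v\in S_t$ together with the omitted leaf at $v$, so there are $|S_t|\cdot t$ of them. The sign of each copy is $(-1)^{r}$, where $r$ counts inward edges, and it flips according to whether the omitted leaf edge is inward or outward.

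The key computation is to write $c_{H'_t}$ as a nonzero constant times a signed count of the form $\sum_{v\in S_t}(\mathrm{out}(v)-\mathrm{in}(v))$, adjusted by the direction of $uv$. I then hope to combine the vanishing of this coefficient with that of the analogous coefficient obtained by dropping one whole leaf of $u$. If the signed sums cancel in the bad case, I would instead compare coefficients modulo $4$. The goal is to show that $|S_t|$ odd forces some such coefficient to be an odd multiple of $2(n-|V(H)|)!\,|\Aut(H)|$, hence nonzero. I am least certain this last step closes without further case analysis on the orientations of the edges between $u$ and $S_t$.
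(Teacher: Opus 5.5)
Your step for $e(T)$ odd (take $H=\UG(T)$) is fine, and so is recovering $|S_0|$ from the other levels. The core step fails, though.

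\textbf{The anchoring claim for $H_t$.} The claim that every copy of $H_t=\{u\}\cup\bigcup_{s\ge t}T_s$ sends $u$ to $u$ needs $H_t$ to have diameter four, i.e.\ $\sum_{s\ge t}|S_s|\ge 2$. Suppose the top non-empty level is a single vertex $v\in S_{t_1}$, which is exactly the configuration the lemma must exclude. Then $H_{t_1}$ is the star $K_{1,t_1+1}$ centred at $v$, and $\UG(T)$ may contain further copies centred at $u$, with opposite sign. So ``$e(H_t)$ is even for every $t\ge 1$'' is not established. Telescoping then only gives $|S_t|(t+1)$ even below the second non-empty level $t_2$, together with $(t_1+1)+|S_{t_2}|(t_2+1)$ even.

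Here is a concrete case. Take $S_4=\{v\}$, $S_2=\{w\}$, $S_1=\{q\}$ and $|S_0|=2$, so $e(T)=12$.
\begin{itemize}
  \item $H_4=H_3=K_{1,5}$ has exactly two copies, centred at $v$ and at $u$ (both of degree $5$). These cancel whenever the in-degrees of $u$ and $v$ have different parities.
  \item $H_2$ and $H_1$ have $8$ and $10$ edges, so they carry no monomial.
\end{itemize}
Hence the family $H_t$ says nothing about $|S_4|$ or $|S_2|$, and ``this settles every even $t$'' is false.

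The extreme case is $p(T)=2$ with branch sizes $r<s$. There, any test graph that removes or shortens away one of the two positive branches is no longer anchored at $u$. The paper needs a separate argument for this case: delete the central edge $uy$ of the larger branch if $|S_0|\ne r$, delete $ux$ if $|S_0|=r$, and check that the other one-edge deletion gives a non-isomorphic graph.

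\textbf{The odd-$t$ step.} This step is not a proof either, quite apart from the unfinished mod-$4$ idea. The count $|S_t|\cdot t$ is wrong for a modification of $H_t$: since $H_t$ omits $S_{t-1}$, the shortened $(t-1)$-branch can also be realised by a full branch of $S_{t-1}$.

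\textbf{The missing idea.} Use test graphs on the whole tree, obtained from $T_\out$ by deleting a single edge. Then every copy of $\UG(H)$ is a one-edge deletion of $\UG(T)$, and the rooted profile pins down which edge was deleted.
\begin{itemize}
  \item For $t\ge 2$, delete the central edge $uv$ with $v\in S_t$. This leaves exactly $|S_t|$ copies, each contributing the same non-zero constant times the sign of the deleted central edge. So $c_H=0$ forces $|S_t|$ even for every $t\ge 2$ at once, whatever the parity of $t$ and whether or not $S_t$ is the top level.
  \item For $t=1$, delete the leaf-edge at one vertex of $S_1$.
  \item For $t=0$, delete one edge $uz$ with $z\in S_0$.
\end{itemize}
On the full tree your own leaf-deletion idea also closes at once. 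There are exactly $|S_t|\,t$ copies, each contributing $\pm$ the same constant. When $t$ and $|S_t|$ are both odd, this is an odd number of terms, so the sum cannot vanish.

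All of these anchorings need at least two positive branches to survive the deletion. For the central-edge and $S_1$ deletions this means $p(T)\ge 3$, which is why $p(T)=2$ has to be handled separately as described above.
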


\begin{proof}
First we prove that $T$ has an even number of edges. Suppose, to the
contrary, that $|E(T)|$ is odd. Since $\UG(T)$ has diameter four, it has
the unique center $u$. Hence every automorphism of $\UG(T)$ fixes $u$.
By Lemma~\ref{lem:constantsigneventree}, the quantity $\sgn_T(\alpha)$
is the same for every automorphism $\alpha$ of $\UG(T)$. Therefore the
coefficient of the monomial $x^T$ is
$$
        c_T
        =
        2\sum_{\alpha\in\Aut(\UG(T))}\sgn_T(\alpha),
$$
which is non-zero. This contradicts the converse invariance of $T$. Thus
$|E(T)|$ is even.

We shall use the following simple rooted-profile observation. Suppose that
a copy of a connected subgraph of $\UG(T)$ is rooted at $u$. Its rooted
branch profile records, for each $s\geq0$, the number of neighbors of
$u$ which have exactly $s$ leaf-neighbors inside the copy. If one deletes
a branch in $S_0$, then this profile changes only by decreasing the
number of $0$-branches by one. If one deletes one leaf-edge from a branch
in $S_s$, with $s\geq1$, and ignores the isolated leaf, then the profile
changes by decreasing the number of $s$-branches by one and increasing
the number of $(s-1)$-branches by one. Finally, if one deletes the central
edge of a branch in $S_s$, then one creates a detached star $K_{1,s}$.
These effects are distinct and will be used below.

We start with $t=0$. Suppose first that $S_0\neq\emptyset$. Let $H$ be the graph obtained from
$T_{\out}$ by deleting one edge from $u$ to a vertex of $S_0$, ignoring
the isolated vertex. Since deleting such an edge does not change the
positive branches, the component of $H$ containing $u$ still has diameter
four. Therefore every copy of $\UG(H)$ in $\UG(T)$ is rooted at the
original center $u$.

The rooted branch profile of $H$ has one fewer $0$-branch than $T$ and
the same number of $s$-branches for every $s\geq1$. By the rooted-profile
observation above, the only way to obtain this profile is to delete one
branch in $S_0$. Hence every copy of $\UG(H)$ is obtained by deleting an
edge from $u$ to a vertex of $S_0$.

For a fixed copy, Lemma~\ref{lem:constantsigneventree} shows that the
inner sum over isomorphisms is a non-zero constant. Moreover, deleting an
edge directed away from $u$ gives the opposite sign from deleting an edge
directed towards $u$. Hence
$$
        c_H=C(a_0^0-b_0^0)
$$
for some non-zero constant $C$. Since $T$ is converse invariant, we have
$c_H=0$, and therefore $a_0^0=b_0^0$. Thus
$$
        |S_0|=a_0^0+b_0^0
$$
is even. If $S_0=\emptyset$, this conclusion is trivial.

It remains to prove that $|S_t|$ is even when $t \geq 1$. Since $\UG(T)$ has diameter four, we have $p(T)\geq2$. We first prove the result when $p(T)\geq3$.

Suppose that $S_1\neq\emptyset$. Let $H$ be obtained from $T_{\out}$
by deleting the unique leaf-edge incident with one vertex of $S_1$,
again ignoring the isolated leaf. Since $p(T)\geq3$, the component
containing $u$ still has at least two positive branches, and so has
diameter four. Thus every copy is rooted at $u$.

The rooted branch profile of $H$ is obtained from that of $T$ by replacing
one $1$-branch by one $0$-branch. By the rooted-profile observation above,
this can only come from deleting the unique leaf-edge of a branch in
$S_1$. Therefore every copy of $\UG(H)$ is obtained by shortening one
branch of $S_1$ to a branch of $S_0$.

For a fixed copy, the inner isomorphism sum is again a non-zero constant.
The sign of the deleted leaf-edge is $+1$ or $-1$ according to whether
that leaf-edge is directed away from or towards the vertex of $S_1$.
Consequently $c_H$ is a signed sum of $|S_1|$ signs, all with the same
non-zero multiplicative constant. If $|S_1|$ were odd, this signed sum
could not be zero. Since $c_H=0$, we conclude that $|S_1|$ is even. If
$S_1=\emptyset$, this is trivial.

Now suppose that $t\geq2$. If $S_t=\emptyset$, there is nothing to
prove. Let $H$ be obtained from $T_{\out}$ by deleting the central edge
$uv$ for one vertex $v\in S_t$, while keeping all leaf-edges incident
with $v$. Then $H$ has no isolated vertices and has $|V(T)|$ vertices and
$|E(T)|-1$ edges. Hence every copy of $\UG(H)$ in $\UG(T)$ is obtained
from $\UG(T)$ by deleting exactly one edge.

The deleted edge cannot be a leaf-edge, since that would create an
isolated vertex. It must therefore be a central edge. Moreover, deleting
the central edge of a branch in $S_s$ leaves a detached star $K_{1,s}$.
Thus, in order to obtain a copy of $\UG(H)$, we must have $s=t$. Hence
the copies of $\UG(H)$ are precisely the graphs obtained by deleting the
central edge of one branch in $S_t$.

Since $p(T)\geq3$, after deleting one positive branch the component
containing $u$ still has at least two positive branches, and therefore
has diameter four. Thus this component is rooted at $u$. The other
component is a star $K_{1,t}$. By Lemma~\ref{lem:constantsigneventree}
and Lemma~\ref{lem:constantsignstar}, the inner isomorphism sum for each
copy is a non-zero constant. Deleting a central edge oriented away from
$u$ gives the opposite sign from deleting a central edge oriented towards
$u$. Hence $c_H$ is a signed sum of $|S_t|$ signs, all with the same
non-zero multiplicative constant. If $|S_t|$ were odd, then $c_H\neq0$,
contradicting converse invariance. Therefore $|S_t|$ is even.

This proves the lemma when $p(T)\geq3$.

Finally, let us consider the case $p(T)=2$. Let $x$ and $y$ be the two
non-leaf neighbors of $u$. Suppose that $x$ has $r$ leaf-neighbors and
$y$ has $s$ leaf-neighbors, with $1\leq r\leq s$, and $m = |S_0|$.

We now prove that $r=s$. Suppose, for a contradiction, that $r<s$. Since
$|E(T)|$ is even and $m$ is even, the number $r+s$ is even.

First assume that $m\neq r$. Let $H$ be obtained from $T_{\out}$ by
deleting the central edge $uy$ of the larger branch. Then $H$ has two
components: a detached star $K_{1,s}$, and the component $R$ containing
$u$, which consists of the branch at $x$ together with the $m$ leaves
adjacent to $u$.

If $m=0$, then $R$ is a star $K_{1,r+1}$. Since $r+s$ is even, we cannot
have $s=r+1$, and hence the two components of $H$ are not isomorphic. If
$m>0$, then $R$ has diameter three, whereas $K_{1,s}$ has diameter two,
so again the two components are not isomorphic.

Every copy of $\UG(H)$ in $\UG(T)$ has all vertices of $T$ and one edge
less than $T$. Thus it is obtained by deleting one edge of $\UG(T)$. A
leaf-edge deletion or an $S_0$-edge deletion would create an isolated
vertex, which $H$ does not have. Hence the deleted edge must be one of
$ux$ and $uy$. Deleting $uy$ gives the graph $H$. Deleting $ux$ gives a
detached star $K_{1,r}$ and the component consisting of the branch at
$y$ together with the $m$ leaves adjacent to $u$, which is not isomorphic
to $H$ because $r<s$ and the component types described above do not
match. Therefore every copy of $\UG(H)$ is obtained by deleting $uy$.

For this unique type of copy, the inner isomorphism sum is non-zero. If
$m=0$, both components are non-isomorphic stars, and this follows from
Lemma~\ref{lem:constantsignstar}. If $m>0$, the component $R$ has two
non-leaf vertices of degrees $m+1$ and $r+1$, which are distinct because
$m\neq r$; hence every automorphism of $R$ fixes these two vertices and
only permutes leaves, so the signed automorphism sum is again non-zero.
Thus $c_H\neq0$, contradicting converse invariance.

We may therefore assume that $m=r$. Since $m$ is even, this gives
$r\geq2$. Now let $H$ be obtained from $T_{\out}$ by deleting the central
edge $ux$ of the smaller branch. The detached component is the star
$K_{1,r}$, which is not a single edge. The component containing $u$
consists of the branch at $y$ together with the $m=r$ leaves adjacent to
$u$. Its two non-leaf vertices have degrees $s+1$ and $r+1$, which are
distinct because $s>r$.

As before, every copy of $\UG(H)$ must be obtained by deleting one of the
two central edges $ux$ or $uy$. Deleting $ux$ gives $H$. Deleting $uy$
gives a detached star $K_{1,s}$ and a component whose two non-leaf
vertices both have degree $r+1$, and this is not isomorphic to $H$.
Therefore there is only one type of copy of $\UG(H)$ in $\UG(T)$.

The inner isomorphism sum is non-zero: the detached component is a star
with at least three vertices, and the other component has two non-leaf
vertices of distinct degrees, so its automorphisms only permute leaves.
Consequently $c_H\neq0$, again contradicting converse invariance.

This contradiction proves that $r=s$. Therefore the two positive branches
both lie in the same set $S_r$, so $|S_r|=2$, while every other $S_t$ with
$t\geq1$ is empty. We have already proved that $|S_0|$ is even. Hence
$|S_t|$ is even for every $t\geq0$ in the case $p(T)=2$ as well.

Combining the cases $p(T)\geq3$ and $p(T)=2$ proves the lemma.
\end{proof}

We now begin to show Theorem \ref{thm:diam4classification} in the case $p(T)\geq4$.  In this case we prove that $T$ is self-converse by showing inductively that each $T_t$ is self-converse. We shall use the following polynomials: for $0\leq r\leq t$, put
\[
        K_r^{(t)}(x)=\sum_{q=0}^r(-1)^q\binom{x}{q}\binom{t-x}{r-q}.
\]
Also put
\[
        M_r^t=\sum_{k=0}^t(a_k^t-b_k^t)K_r^{(t)}(k),
        \qquad
        N_r^t=\sum_{k=0}^t(a_k^t+b_k^t)K_r^{(t)}(k).
\]
The goal is to prove that $M_r^t=0$ for every $0\leq r\leq t$ and every $t\geq0$.

\begin{lemma}\label{lem:diam4local}
Let $w\in S_s$.
\begin{enumerate}[label=\textup{(\roman*)}]
    \item Suppose that the edge $uw$ is kept and exactly $r$ leaf-edges incident with $w$ are deleted.  Then the signed sum over all choices of these $r$ deleted leaf-edges is
    \[
        K_r^{(s)}(k)
    \]
    if $w\in A_k^s$, and
    \[
        (-1)^rK_r^{(s)}(k)
    \]
    if $w\in B_k^s$.

    \item Suppose that the edge $uw$ is deleted and all leaf-edges incident with $w$ are kept.  Then the corresponding factor is
    \[
        \eta(w)=
        \begin{cases}
        +1, & w\in A_k^s,\\
        -1, & w\in B_k^s.
        \end{cases}
    \]

    \item If $s=1$ and the whole branch at $w$ is deleted, then the corresponding factor is $K_1^{(1)}(k)$ both for $w\in A_k^1$ and for $w\in B_k^1$.
\end{enumerate}
\end{lemma}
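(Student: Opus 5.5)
The computation is local, and I would set it up as follows. The convention (from Corollary~\ref{cor:signout}) is that the subgraph $H$ is a subgraph of $T_\out$, rooted at $u$. For a copy $S$ and a rooted isomorphism $\alpha\colon S\to H$, each kept edge $e$ of $S$ contributes $-1$ exactly when $e$ is directed towards $u$ in $T$. This is because the labels on $T$ and on $T_\out$ are induced by their orientations, and by Lemma~\ref{lem:constantsigneventree} the product does not depend on which rooted isomorphism is chosen. The total sign of a copy therefore factors as a product over the branches $w$ of $u$. The factor attached to $w$ is $(-1)^{\#\{\text{kept edges in the branch at } w \text{ pointing towards } u\}}$. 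The lemma describes exactly this factor, summed over the local choices of which edges in the branch at $w$ are deleted.

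For (i), fix $w\in S_s$ and keep $uw$. If $w\in A_k^s$, then $uw$ points away from $u$ and contributes $+1$. Of the $s$ leaf-edges at $w$, exactly $k$ point towards $w$, hence towards $u$, and $s-k$ point away. We choose $r$ of them to delete. Suppose $q$ of the deleted edges are inward. The kept inward edges then number $k-q$, so the factor is $(-1)^{k-q}$. Summing over all choices gives
\[
(-1)^k\sum_q(-1)^q\binom{k}{q}\binom{s-k}{r-q}=(-1)^kK_r^{(s)}(k).
\]
Here I hit a small normalisation issue. The global sign $(-1)^k$ is intrinsic to $w$ and does not depend on the deletion. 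Comparing with (ii), which is stated with all leaf-edges kept, it appears to have been absorbed into a reference sign. So I would read the sign as relative to the full branch, that is, divide by the sign of the undeleted branch (equivalently, measure against the copy of $T$ itself). With this normalisation the kept-edge factor becomes $(-1)^{q}$ over deleted inward edges, and summing gives exactly $K_r^{(s)}(k)$.

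For $w\in B_k^s$, the edge $uw$ points towards $u$. Exactly $k$ leaves point away from $w$, so those $k$ edges point away from $u$, and the remaining $s-k$ point towards $u$. Relative to the full branch, deleting an edge that points towards $u$ flips the sign. So if we delete $q$ of the $k$ outward edges and $r-q$ of the inward ones, the factor is $(-1)^{r-q}$. The sum is then
\[
\sum_q(-1)^{r-q}\binom{k}{q}\binom{s-k}{r-q}=(-1)^rK_r^{(s)}(k).
\]

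For (ii), we delete $uw$ and keep all the leaf-edges. Relative to the full branch, the only change is the removal of $uw$. This flips the sign precisely when $uw$ points towards $u$, i.e.\ when $w\in B_k^s$. That gives $\eta(w)=\pm1$ as stated. I would also note that, for the detached star $K_{1,s}$, Lemma~\ref{lem:constantsignstar} ensures the isomorphism choices inside it do not change this factor.

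For (iii), take $s=1$ and delete both $uw$ and the single leaf-edge. For $w\in A_k^1$, the deleted edges include $k$ inward ones, giving $(-1)^k=1-2k=K_1^{(1)}(k)$. For $w\in B_k^1$, the edge $uw$ is inward, and the leaf-edge is inward when $k=0$; this gives $(-1)^{1+(1-k)}=(-1)^k=K_1^{(1)}(k)$.

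The main obstacle I expect is the sign convention: making sure that "corresponding factor" means the sign relative to the undeleted branch, so that the three parts are consistent with one another. I would fix that first and then check the small cases of (iii) against it.
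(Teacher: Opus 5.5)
Your proposal is correct and follows essentially the same route as the paper. The paper also measures each branch's factor as the sign of the retained sub-branch divided by the sign of the full branch: its proof says the central sign ``cancels in the ratio'' in (i) and that the ratio in (iii) ``is the sign of the whole branch''. So your normalisation is the intended one, and your counts of deleted inward edges, giving $K_r^{(s)}(k)$, $(-1)^rK_r^{(s)}(k)$, $\eta(w)$ and $(-1)^k=K_1^{(1)}(k)$, match the paper's computation.
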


\begin{proof}
Suppose first that $w\in A_k^s$.  Then $u\to w$, and exactly $k$ leaf-edges point towards $w$.  If $r$ leaf-edges are deleted and exactly $q$ of them are among these $k$ inward leaf-edges, then the sign changes by $(-1)^q$.  Summing over $q$ gives
\[
        \sum_{q=0}^r(-1)^q\binom{k}{q}\binom{s-k}{r-q}=K_r^{(s)}(k).
\]

Now suppose that $w\in B_k^s$.  Then $w\to u$, and exactly $k$ leaf-edges point away from $w$, so $s-k$ leaf-edges point towards $w$.  Deleting $r$ leaf-edges gives the leaf contribution
\[
        \sum_{q=0}^r(-1)^q\binom{s-k}{q}\binom{k}{r-q}=(-1)^rK_r^{(s)}(k).
\]
Since the central edge is kept in both the original branch and the selected sub-branch, the central sign cancels in the ratio.  This proves \textup{(i)}.

If the central edge is deleted and all leaves are kept, then the leaf contribution is unchanged, and only the sign of the central edge is removed.  This gives $+1$ in the $A$-case and $-1$ in the $B$-case, proving \textup{(ii)}.

Finally, assume $s=1$ and the whole branch is deleted.  The selected contribution is $1$, so the ratio is the sign of the whole branch.  In both the $A_k^1$-case and the $B_k^1$-case this sign is $(-1)^k=K_1^{(1)}(k)$.  This proves \textup{(iii)}.
\end{proof}

\begin{definition}\label{def:diam4testgraphs}
Assume that the relevant set $S_t$ is non-empty.
\begin{enumerate}[label=\textup{(\roman*)}]
    \item For $t=0$ or $t\geq2$, let $Z^t$ be obtained from $T_\out$ by deleting the central edge $uv$ for one vertex $v\in S_t$, keeping all leaf-edges incident with $v$ if $t\geq2$.

    \item For $t\geq1$ and $l$ odd, $1\leq l\leq t$, let $P_l^t$ be obtained from $T_\out$ by deleting $l$ leaf-edges incident with one vertex $v\in S_t$, while keeping the central edge $uv$.

    \item For $t\geq2$ and $l\geq2$ even, $l\leq t$, let $Q_l^t$ be obtained from $T_\out$ by choosing two distinct vertices $a,v\in S_t$, deleting the central edge $ua$, keeping all leaf-edges incident with $a$, and deleting $l$ leaf-edges incident with $v$, while keeping $uv$.

    \item If $S_1\neq\emptyset$, let $Y^1$ be obtained from $T_\out$ by choosing two distinct vertices $a,v\in S_1$, deleting the whole branch at $a$, and deleting the unique leaf-edge incident with $v$.
\end{enumerate}
\end{definition}

The two distinct vertices required in the definitions of $Q_l^t$ and $Y^1$ exist by Lemma \ref{lem:evennumberofedges}.  Moreover, all the graphs in Definition \ref{def:diam4testgraphs} have an odd number of edges, since $T$ has an even number of edges and in each case we delete an odd number of edges.

\begin{lemma}\label{lem:diam4anchor}
Assume that $p(T)\geq4$.  For every graph in Definition \ref{def:diam4testgraphs}, every copy of its underlying graph in $\UG(T)$ sends the component of diameter four to a subgraph centered at the original center $u$ of $T$.  Consequently, every such copy may be treated as a rooted copy with root $u$ in the coefficient formula.  Moreover, none of these graphs has a connected component consisting of a single edge.
\end{lemma}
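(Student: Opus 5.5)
Each graph $G$ in Definition~\ref{def:diam4testgraphs} is obtained from $T_\out$ by deleting at most a few edges near at most two vertices of $S_t$. The plan is to identify the component $R$ of $G$ containing $u$ and check three things in turn: that $R$ has diameter four, that every copy of $R$ in $\UG(T)$ has its center at $u$, and that no component of $G$ is a single edge.

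\textbf{Step 1: $R$ has diameter four.} In each of $Z^t$, $P_l^t$, $Q_l^t$, $Y^1$, at most one positive branch (a neighbor of $u$ with at least one leaf) is removed from $R$ or reduced to a $0$-branch. In $P_l^t$ or $Q_l^t$ with $l=t$, one more branch becomes a $0$-branch, and in $Y^1$ the vertex $v$ becomes a $0$-branch. So at most two positive branches are lost. Since $p(T)\geq 4$, at least two positive branches remain at $u$. Hence $R$ contains a path leaf--$w_1$--$u$--$w_2$--leaf, so $R$ has diameter four. Since $R\subseteq \UG(T)$ and $T$ has diameter four, $R$ has diameter exactly four, with unique center $u$ in its own structure.

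\textbf{Step 2: every copy is centered at $u$.} Let $\phi$ be an embedding of $R$ into $\UG(T)$. Then $\phi(R)$ has diameter four inside a tree of diameter four. Its center $\phi(u)$ is the middle vertex of a path of length four in $\UG(T)$. In a tree of diameter four, every path of length four passes through the unique center $u$, and its middle vertex is $u$, because both ends are at distance two from $u$. Hence $\phi(u)=u$. The other components of $G$ are stars $K_{1,t}$ with $t\geq2$ (detached in $Z^t$ and $Q_l^t$), or isolated vertices, which we ignore. They impose no rooting, so each copy of $G$ is a rooted copy with root $u$ on its diameter-four component, as claimed.

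\textbf{Step 3: no single-edge component.} The detached components are $K_{1,t}$ with $t\geq 2$ (for $Z^t$ with $t\geq2$ and for $Q_l^t$) or isolated vertices. For $Z^0$, deleting $uv$ with $v\in S_0$ leaves $v$ isolated. In $P_l^t$ and $Y^1$, the deleted leaf-edges and branches also only isolate vertices. The component $R$ has diameter four, so it is not an edge. Therefore no component is a single edge, which ensures that Lemmas~\ref{lem:constantsignstar} and \ref{lem:constantsigneventree} apply componentwise.

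\textbf{Main obstacle.} The delicate point is the counting in Step 1 for the boundary cases, namely $l=t$ in $Q_l^t$ and the graph $Y^1$. These are the cases where two branches degrade, and they explain why the hypothesis is $p(T)\geq4$ rather than $p(T)\geq3$. I would check each of the four definitions explicitly against this count.
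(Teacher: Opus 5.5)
Your proposal is correct and follows essentially the same route as the paper: at most two positive branches are lost from the component containing $u$ (this happens only in $Q_l^t$ and $Y^1$), so $p(T)\geq 4$ leaves a path of length four through $u$; every such path in $\UG(T)$ has middle vertex $u$; and the only non-trivial detached components are stars $K_{1,t}$ with $t\geq 2$. Your Step 1 count double-counts for $P_l^t$, where only one branch is touched, but overcounting only weakens the bound and is harmless.
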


\begin{proof}
The graph $P_l^t$ is connected.  It is obtained by shortening at most one positive branch, so it still contains at least $p(T)-1\geq3$ positive branches.  Hence it contains a path of length four, and has diameter four.

For $Z^t$ with $t\geq2$, the graph has one detached star $K_{1,t}$ and one component containing $u$.  The component containing $u$ has at least $p(T)-1\geq3$ positive branches, and hence has diameter four.  The detached component has diameter two.  For $Z^0$, the graph is connected and still contains all positive branches, so it has diameter four.

For $Q_l^t$, the component containing $u$ loses the anchor branch, and the probe branch may be shortened all the way to a leaf-branch.  Nevertheless it still contains at least $p(T)-2\geq2$ positive branches, and hence has diameter four.  The detached component is a star $K_{1,t}$ with $t\geq2$.  The same argument applies to $Y^1$: after deleting one $S_1$-branch and shortening another one, at least two positive branches remain, so the remaining component has diameter four.

Every path of length four in $\UG(T)$ has middle vertex $u$, because $u$ is the center of the diameter-four tree $\UG(T)$.  Hence any copy of a component of diameter four is necessarily rooted at $u$.  This proves the anchoring assertion.  The final assertion follows directly from the definitions: the only detached components are stars $K_{1,t}$ with $t\geq2$.
\end{proof}

Because of Lemmas \ref{lem:diam4anchor}, \ref{lem:constantsigneventree}, and \ref{lem:constantsignstar}, the inner sum over isomorphisms in the coefficient formula contributes only a non-zero constant depending on the test graph and on the rooted profile of the copy.  Thus, from now on, we compute coefficients up to non-zero multiplicative constants.  This is harmless, since we only use the fact that these coefficients vanish.

\paragraph{Branches and sub-branches.}
For a vertex $w\in S_s$, let $L(w)$ be the set of the $s$ leaves adjacent with $w$.  The \emph{branch} at $w$ is the subgraph
\[
        \mathcal B(w)=T[\{u,w\}\cup L(w)].
\]
Thus $E(\mathcal B(w))=\{uw\}\cup\{wx:x\in L(w)\}$, and we call $s$ the \emph{size} of the branch.  If $F$ is a subgraph of $\UG(T)$, the \emph{trace} of $F$ on the branch $\mathcal B(w)$ is
\[
        \operatorname{tr}_F(w)=E(F)\cap E(\mathcal B(w)).
\]
When the copy is rooted at $u$, each non-empty trace has one of the following forms.
\begin{itemize}
    \item A \emph{central $r$-sub-branch}, denoted $C_r(w)$, consists of the central edge $uw$ together with exactly $r$ leaf-edges incident with $w$, where $0\leq r\leq s$.
    \item A \emph{free $r$-sub-branch}, denoted $F_r(w)$, consists of exactly $r$ leaf-edges incident with $w$ and does not contain $uw$, where $1\leq r\leq s$.  This is a detached star $K_{1,r}$.
    \item The trace may also be empty; in that case the whole branch is absent from the chosen subgraph.
\end{itemize}
If $w\in S_s$ and the trace is $C_r(w)$ with $r<s$, we say that the branch has been \emph{shortened from $s$ to $r$}, and we write $s\to r$.  If the trace is a detached free star or is empty, we record this by an edge $s\to\ast$ in the bookkeeping below.  This notation records only the type of the copy; the actual choices of the vertices and of the retained or deleted leaves are counted in the coefficient calculation.

\begin{lemma}\label{lem:diam4profiles}
Assume $p(T)\geq4$.
\begin{enumerate}[label=\textup{(\roman*)}]
    \item Every rooted copy of $Z^t$, for $t=0$ or $t\geq2$, is principal: it is obtained by deleting the central edge of one branch in $S_t$, and leaving all other branches unchanged.  For $t\geq2$ the deleted central edge leaves a detached free $t$-sub-branch; for $t=0$ the deleted branch has empty trace.

    \item Let $t\geq1$ and let $l$ be odd, $1\leq l\leq t$.  Every rooted copy of $P_l^t$ is described by a strictly decreasing chain
    \[
        t=t_0>t_1>\cdots>t_j=t-l.
    \]
    If $l_i=t_i-t_{i+1}$, then the copy is obtained by shortening one branch in $S_{t_i}$ from $t_i$ to $t_{i+1}$, for each $i=0,\dots,j-1$, and leaving every other branch unchanged.

    \item Let $t\geq2$ and let $l\geq2$ be even.  Every rooted copy of $Q_l^t$ is obtained as follows.  First choose an anchor branch in $S_t$ and replace it by a detached free $t$-sub-branch, that is, delete its central edge and keep all its leaves.  Then, in the central component, choose a chain
    \[
        t=t_0>t_1>\cdots>t_j=t-l
    \]
    as in part \textup{(ii)}.  The anchor branch is distinct from the branch of $S_t$ used at the first step of the chain.

    \item Every rooted copy of $Y^1$ has one of the following two forms.  Either one branch in $S_1$ has empty trace and a distinct branch in $S_1$ is shortened from $1$ to $0$, or one branch in $S_0$ has empty trace and two distinct branches in $S_1$ are shortened from $1$ to $0$.
\end{enumerate}
\end{lemma}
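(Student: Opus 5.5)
The proof is a counting argument. By Lemma \ref{lem:diam4anchor}, every copy is rooted at $u$, so each branch $\mathcal B(w)$, $w\in S_s$, has trace of the form $C_r(w)$, $F_r(w)$ or empty. I will therefore compare two invariants of the test graph with those of the candidate copy $F\subseteq \UG(T)$. The first is the multiset of \emph{detached} components (free stars $K_{1,r}$, recorded as $s\to\ast$). The second is the \emph{rooted branch profile} of the central component, i.e. for each $r\geq0$ the number of neighbours of $u$ with exactly $r$ leaf-neighbours in the copy. The one-edge-component clause of Lemma \ref{lem:diam4anchor} forbids free traces $F_1$. The bookkeeping is a directed multigraph on sizes: each branch $s$ either stays at $s$, goes $s\to r$ with $r<s$, or goes $s\to\ast$. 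Since leaves of $u$ outside $S_0$-type positions cannot be created, a trace can only lose edges.

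The key tool is edge counting. A copy has exactly $|E(T)|-d$ edges, where $d\in\{1,l,l+1,2\}$ is the number of edges deleted in building the test graph. The profile of the test graph differs from that of $T$ only at the listed sizes. I will use a conservation argument: read the profile of $F$ as obtained from that of $T$ by moving branches downward. Starting from the largest size where the two profiles differ, any branch moved down must be compensated further down. The total number of deleted edges bounds the moves. A shortening $s\to r$ costs $s-r$ edges, $s\to\ast$ with free $F_r$ costs $1+s-r$, and an empty trace costs $s+1$.

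For (i) the budget is one edge. A detached $K_{1,t}$ (for $t\geq2$), or an empty trace (for $t=0$), must appear, and this already costs one edge, so nothing else changes. For (ii) the budget is $l$, and there are no detached components. Each branch is either unchanged or shortened. The profile of $F$ must equal that of $T$ with one $t$-branch replaced by a $(t-l)$-branch. The largest size that loses a branch must be $t$, because a larger size losing a branch would need to be refilled from above. Following the refills downward gives a chain $t=t_0>t_1>\dots>t_j=t-l$. Its total cost is exactly $\sum l_i=l$, and this forces every other branch to be unchanged. Part (iii) is the same argument after first locating the detached $K_{1,t}$. The costs force the anchor to be a $t$-branch with all $t$ leaves kept, cost $1$, leaving budget $l$ for the chain in (ii). The anchor is distinct from the first chain branch because both are distinct actual branches of $S_t$.

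For (iv) the budget is three edges, with no detached components. Only sizes $0$ and $1$ change: the test profile has $|S_1|-2$ one-branches and $|S_0|+1$ zero-branches. I will enumerate the ways to lose two $1$-branches with net gain of one $0$-branch within cost $3$. One option is an empty trace on an $S_1$ branch (cost $2$) together with one $1\to0$ shortening (cost $1$). The other is two $1\to0$ shortenings (cost $2$) together with an empty $S_0$ branch (cost $1$). Larger sizes cannot be involved, because a shortening from $s\geq2$ would have to be repaired, which exceeds the budget or breaks the profile.

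The main obstacle is making the ``refill from above'' argument airtight in (ii) and (iii). Several branches of the same size can be shortened simultaneously, so I would formalise it as an inequality. For each threshold $m$, count the branches of size at least $m$ in $T$ and in $F$. Each unit of deficit at a threshold costs at least one edge. Summing over thresholds gives exactly $l$, so every inequality is tight, and tightness yields the chain structure.
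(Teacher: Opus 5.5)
The budget-plus-refill scheme works for (i), (ii) and (iv). In (ii), the refill argument gives a chain from $t$ to $t-l$ costing exactly $l$, and the budget then forbids any other shortening. Part (iii) has a genuine gap, though. Your claim that \emph{the costs force the anchor to be a $t$-branch} is false. Suppose the detached $K_{1,t}$ is a free trace $F_t(w)$ with $w\in S_s$ and $s>t$. Then the anchor costs $1+s-t$. But $Q_l^t$ fixes the number of leaf-edges in the central component. So the remaining branches only have to lose $(\Lambda-s)-(\Lambda-t-l)=l-(s-t)$ leaf-edges, where $\Lambda=\sum_w|L(w)|$. The total is therefore $l+1$ for every such $s$, and the budget cannot see the anchor's level.

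What excludes $s>t$ is the profile above level $t$. Write $N_m$ for the number of branches of size at least $m$. The central component of $Q_l^t$ has $N_s(T)$ such branches. Your copy has at most $N_s(T)-1$, because $w$ has left the central component and every other trace can only shrink. This is your refill idea, and it is what the paper uses. In the paper's decomposition of $\operatorname{out}(h)-\operatorname{in}(h)=m_h-m'_h$, every path starts at $t$, yet the unique $\ast$-edge must leave a level $\geq t$ to carry a free $t$-star. You need to invoke this argument here, not the costs.

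Your proposed formalisation of the refill step does not work as written. When all traces are central, $\sum_w(s_w-r_w)=\sum_{m\ge1}\#\{w:r_w<m\le s_w\}=\sum_{m\ge1}(N_m(T)-N_m(F))$ holds identically. Both sides equal $l$ by counting leaf-edges, so ``tightness'' is automatic and carries no information. The chain comes instead from the target profile fixing each deficit separately. Here $N_m(T)-N_m(F)$ is $1$ for $t-l<m\le t$ and $0$ otherwise. Hence the intervals $(r_w,s_w]$ of the shortened branches are pairwise disjoint and tile $(t-l,t]$, which is exactly the chain.

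In (iii), ``each unit of deficit costs at least one edge'' is false: a free $F_t$ cut from an $S_t$-branch creates $t$ units of deficit for a single edge. So first use the thresholds $m>t$ to pin the anchor, then run the interval argument on the remaining branches.

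Two minor points:
\begin{itemize}
\item The budget for $Y^1$ is $3$, not $2$ as in your list of values of $d$.
\item In (ii) you need no empty trace, and in (iv) exactly one. Both follow from the number of neighbours of $u$, which is part of the profile, and you should say so.
\end{itemize}
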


\begin{proof}
By Lemma \ref{lem:diam4anchor}, every copy of the test graphs is rooted at the original center $u$.  Therefore a copy can be analyzed branch by branch using the traces defined above.  We shall repeatedly use the following elementary bookkeeping fact.

Let $m_s=|S_s|$.  Suppose that, in a rooted copy, a branch originally in $S_s$ is shortened to a central $r$-sub-branch, with $r<s$.  We record this by a directed edge $s\to r$.  Suppose also that a branch originally in $S_s$ does not appear in the central component, either because it becomes a detached free star or because it has empty trace.  We record this by a directed edge $s\to\ast$.  Let $m'_h$ be the number of central $h$-sub-branches in the test graph.  Then, for every level $h$, we have
\[
        \operatorname{out}(h)-\operatorname{in}(h)=m_h-m'_h,        \tag{$\ast$}
\]
where $\operatorname{out}(h)$ counts both ordinary edges $h\to r$ and edges $h\to\ast$, while $\operatorname{in}(h)$ counts only ordinary edges $r\to h$.  All ordinary directed edges go strictly down.  Hence the finite directed multigraph formed by these edges is acyclic, and its non-trivial part decomposes into directed paths starting at the levels where $m_h-m'_h>0$ and ending either at levels where $m_h-m'_h<0$ or at the sink $\ast$.

For $Z^t$ with $t\geq2$, the test graph has one detached star $K_{1,t}$ and, in the central component, the same branch profile as $T$ except that one central $t$-branch is missing.  Thus $m_t-m'_t=1$, all other differences are zero, and there is exactly one edge ending at $\ast$.  By the path decomposition above, the only possible source is level $t$.  The path ending at $\ast$ must therefore start at $t$.  It cannot contain an ordinary first step $t\to r<t$, because a branch of size $r<t$ cannot contain a free $t$-sub-branch.  Hence this path is the single edge $t\to\ast$, and the copy is principal.  The case $t=0$ is the same, except that the missing branch has empty trace; since there is no ordinary edge out of level $0$, all other branches are unchanged.

For $P_l^t$, the graph is connected and contains every central edge.  Therefore there are no edges of the form $s\to\ast$.  Its central profile is obtained from that of $T$ by replacing one central $t$-branch by one central $(t-l)$-branch.  Hence the directed multigraph has one unit of source at $t$ and one unit of sink at $t-l$.  Since all ordinary edges strictly decrease the level, its non-trivial part is a single directed path
\[
        t=t_0\to t_1\to\cdots\to t_j=t-l.
\]
Writing $l_i=t_i-t_{i+1}$, the edge $t_i\to t_{i+1}$ says exactly that one branch in $S_{t_i}$ is shortened by deleting $l_i$ of its leaf-edges.  This proves part \textup{(ii)}.

For $Q_l^t$, the graph has one detached star $K_{1,t}$, coming from the anchor, and its central component is obtained from the profile of $T$ by deleting one central $t$-branch and replacing another central $t$-branch by a central $(t-l)$-branch.  Thus the directed multigraph has two paths starting at level $t$: one ending at $\ast$, and one ending at $t-l$.  As in the case of $Z^t$, the path ending at $\ast$ must be the single edge $t\to\ast$.  This is the anchor branch.  The other path is an ordinary decreasing path
\[
        t=t_0\to t_1\to\cdots\to t_j=t-l,
\]
which gives the chain of shortenings in the central component.  These two paths start with two different outgoing edges from level $t$, so they correspond to two distinct branches of $S_t$.  This proves part \textup{(iii)}.

Finally, consider $Y^1$.  The graph $Y^1$ has no detached non-trivial component, but exactly one branch has empty trace.  Its central profile is obtained from the profile of $T$ by removing two central $1$-branches and adding one central $0$-branch.  Equivalently,
\[
        m_h-m'_h=
        \begin{cases}
        2, & h=1,\\
        -1, & h=0,\\
        0, & h\geq2.
        \end{cases}
\]
Thus the non-trivial directed multigraph consists of two paths which start at level $1$: one path ends at level $0$, and the other ends at $\ast$.  Since the only level below $1$ is $0$, there are exactly two possibilities.  Either the path to $\ast$ is the single edge $1\to\ast$, and the other path is $1\to0$; or the path to $\ast$ is $1\to0\to\ast$, and the other path is another edge $1\to0$.  These are precisely the two cases described in the statement.
\end{proof}

\begin{lemma}\label{lem:coefficientshtl}
Assume that $p(T)\geq4$, and suppose that $T_s$ is self-converse for every $s<t$.
\begin{enumerate}[label=\textup{(\roman*)}]
    \item If $t=0$ or $t\geq2$, then
    \[
        c_{Z^t}=\lambda_t M_0^t
    \]
    for some non-zero constant $\lambda_t$.

    \item If $t\geq1$ and $l$ is odd, $1\leq l\leq t$, then
    \[
        c_{P_l^t}=\lambda_{t,l}M_l^t+
        \sum_{\substack{0\leq r<l\\ r\text{ odd}}}\lambda_{t,l,r}M_r^t,
    \]
    where $\lambda_{t,l}\neq0$.

    \item If $t\geq2$ and $l\geq2$ is even, then, after using $M_0^t=0$,
    \[
        c_{Q_l^t}=\mu_{t,l}M_l^t+
        \sum_{\substack{0<r<l\\ r\text{ even}}}\mu_{t,l,r}M_r^t,
    \]
    where $\mu_{t,l}\neq0$.

    \item If $S_1\neq\emptyset$, then
    \[
        c_{P_1^1}=\lambda M_1^1
    \]
    with $\lambda\neq0$, and using $M_0^0=0$,
    \[
        c_{Y^1}=\mu\bigl(N_1^1M_1^1-M_0^1\bigr)
    \]
    for some non-zero constant $\mu$.
\end{enumerate}
\end{lemma}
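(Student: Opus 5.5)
The plan is to evaluate each coefficient with formula \eqref{eq:coef-general}, organizing the copies by the descriptions in Lemma~\ref{lem:diam4profiles}. By Lemma~\ref{lem:diam4anchor} every copy is rooted at $u$. By Lemmas~\ref{lem:constantsigneventree} and~\ref{lem:constantsignstar} and Corollary~\ref{cor:signout}, the inner sum over isomorphisms for a copy $S$ equals a fixed non-zero constant, namely the number of isomorphisms times the sign of one reference copy, multiplied by the product of the local sign ratios of the modified branches. These ratios are exactly the factors computed in Lemma~\ref{lem:diam4local}.

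Consequently, for each chain type in Lemma~\ref{lem:diam4profiles}, the contribution is a combinatorial weight (the number of sub-branch choices, already incorporated into $K$) times a product over the modified branches of the sums of their local factors over the relevant $S_{t_i}$. A branch of $S_s$ shortened by $r$ leaves has local factor $\sum_k a_k^sK_r^{(s)}(k)+(-1)^r\sum_k b_k^sK_r^{(s)}(k)$. This equals $M_r^s$ when $r$ is odd and $N_r^s$ when $r$ is even.

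\emph{Part (i).} A copy of $Z^t$ is principal, so the coefficient is a constant times $\sum_{w\in S_t}\eta(w)=\sum_k(a_k^t-b_k^t)=M_0^t$, since $K_0^{(t)}\equiv1$.

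\emph{Part (ii).} A chain $t=t_0>\dots>t_j=t-l$ with steps $l_i$ contributes a constant times $\prod_i X_i$, where $X_i$ is $M_{l_i}^{t_i}$ if $l_i$ is odd and $N_{l_i}^{t_i}$ if $l_i$ is even. Because $l=\sum l_i$ is odd, some step is odd. If that odd step occurs at a level $t_i<t$, then $M^{t_i}_{l_i}=0$, because $T_{t_i}$ is self-converse by hypothesis. Hence only chains whose first step $l_0=r$ is odd survive; all their later steps are even and give $N$-factors. Each such chain therefore contributes a constant times $M_r^t$ with $r\le l$ odd.

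One caveat is that if the first step and a later step use the same level, the branches must be distinct. This only alters counting constants, because at levels $t_i<t$ the sums involve $a+b$ with a distinct-branch correction. I have to check that this correction still factors through $M_r^t$; it should, since the correction is itself a level-$t_i$ quantity.

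The trivial chain $j=1$, $r=l$ gives coefficient $\lambda_{t,l}$, a positive counting constant times the common isomorphism factor, hence non-zero. The main care point is making sure the remaining chains cannot also produce $M_l^t$ with a cancelling constant. They cannot: any other chain has $j\ge2$ and a first step $r<l$.

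\emph{Part (iii).} The argument is the same with an extra anchor factor. The anchor contributes $\eta(a)$, so chains in which the anchor and the first branch are distinct contribute a constant times $\sum_{a\ne v}\eta(a)\,(\text{factor of }v)$. Now $l$ is even, so either all steps are even or there is a pair of odd steps. Odd steps at levels below $t$ kill the chain, so the surviving chains have either all steps even or an odd first step together with another odd step below $t$; the latter vanish as well.

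So all steps are even, with first step $r$ even. Writing $\sum_{a\ne v}\eta(a)Y(v)=M_0^t\cdot N_r^t-\sum_v\eta(v)Y(v)$, where $\eta(v)$ times the even-$r$ local factor gives the $(a-b)$-weighted sum $M_r^t$, the $M_0^t$ term is dropped as allowed. This leaves a constant times $M_r^t$ with $r$ even, $0<r\le l$; the case $r=0$ is impossible because the first step is strict. The leading term $r=l$ again has a non-zero coefficient.

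\emph{Part (iv).} For $P_1^1$ the only chain is $1\to0$, giving $M_1^1$. For $Y^1$, the first form gives $\sum_{a\ne v}K_1^{(1)}(k_a)\,\epsilon(v)$, where $\epsilon(v)$ is the shortening factor of $v$. Using part (iii) of Lemma~\ref{lem:diam4local}, this equals $N_1^1M_1^1-\sum_v K_1^{(1)}(k_v)\,\epsilon(v)$. The diagonal term $K_1^{(1)}(k)\cdot(\pm K_1^{(1)}(k))=\pm1$ sums to $\sum_k(a_k^1-b_k^1)=M_0^1$. The second form contains an empty $S_0$ factor, which is $M_0^0=0$.

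The main obstacle is tracking the relative constants so that the two forms combine exactly as $\mu(N_1^1M_1^1-M_0^1)$; specifically, one must verify that the isomorphism-count constants of the two forms agree, since they are copies of the same graph.
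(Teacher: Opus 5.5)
Your proposal is correct and follows essentially the same route as the paper: you organize copies by the chain and anchor descriptions of Lemma~\ref{lem:diam4profiles}, write each copy's contribution as a global constant times the product of the local factors of Lemma~\ref{lem:diam4local}, kill lower odd steps by the induction hypothesis, and treat the anchor via $\sum_{a\neq v}\eta(a)K_r^{(t)}(k(v))=M_0^tN_r^t-M_r^t$ and its $Y^1$ analogue. The two points you leave open are vacuous: the levels of a strictly decreasing chain are pairwise distinct, so no two steps share a level and no distinct-branch correction arises in part (ii); and in $Y^1$ the second form carries the factor $M_0^0=0$, while every copy of a fixed test graph has the same isomorphism count $|\Aut(\UG(H))|$, so no relative constants need to be matched.
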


\begin{proof}
By Lemma \ref{lem:diam4anchor}, every copy is rooted.  As observed above, the sum over isomorphisms gives only a non-zero multiplicative constant. The constants may depend on the rooted profile, but this does not affect the triangular argument: every non-principal surviving profile contributes
only a previously known moment, while the principal profile is the unique profile contributing the leading moment.  We therefore compute only the signed sum of rooted copies.

For $Z^t$, every copy is principal by Lemma \ref{lem:diam4profiles}.  The unique modified branch contributes $+1$ if it lies in some $A_k^t$, and $-1$ if it lies in some $B_k^t$.  Therefore the signed sum is a non-zero constant times
\[
        \sum_{k=0}^t(a_k^t-b_k^t)=M_0^t.
\]
This proves \textup{(i)}.

Let $l$ be odd and consider $P_l^t$.  By Lemma \ref{lem:diam4profiles}, every rooted copy is encoded by a chain
\[
        t=t_0>t_1>\cdots>t_j=t-l.
\]
Put $l_i=t_i-t_{i+1}$.  By Lemma \ref{lem:diam4local}, the local contribution of the step $t_i\to t_{i+1}$ is $M_{l_i}^{t_i}$ if $l_i$ is odd, and $N_{l_i}^{t_i}$ if $l_i$ is even.  If some lower step $l_i$, $i\geq1$, is odd, then the factor $M_{l_i}^{t_i}$ vanishes by the induction hypothesis, because $t_i<t$ and $T_{t_i}$ is self-converse.  Hence every surviving chain has all lower steps even.  Since $l=l_0+\cdots+l_{j-1}$ is odd, the first step $l_0$ is odd.  The principal chain has $l_0=l$ and gives a non-zero multiple of $M_l^t$.  Every other surviving chain has $l_0<l$ and gives a multiple of a previous odd moment $M_{l_0}^t$.  This proves \textup{(ii)}.

Now let $l\geq2$ be even and consider $Q_l^t$.  By Lemma \ref{lem:diam4profiles}, a rooted copy consists of an anchor in $S_t$ and a decreasing chain
\[
        t=t_0>t_1>\cdots>t_j=t-l
\]
in the central component.  As in the previous paragraph, any lower odd step gives a vanishing factor by the induction hypothesis.  Thus every surviving chain has all lower steps even, and so $l_0$ is even.  The lower steps contribute only constants of type $N_{l_i}^{t_i}$.

It remains to compute the combined contribution of the anchor and the first step of the chain.  For a fixed even $r$, the anchor contributes
\[
        \eta(a)=
        \begin{cases}
        +1, & a\in A_k^t,\\
        -1, & a\in B_k^t,
        \end{cases}
\]
while the first step contributes $K_r^{(t)}(k(v))$ with the same sign for $A$- and $B$-vertices.  Since the anchor and the first vertex of the chain are distinct, the same-level sum is
\[
        \sum_{\substack{a,v\in S_t\\a\neq v}}
        \eta(a)K_r^{(t)}(k(v))
        =
        \left(\sum_{a\in S_t}\eta(a)\right)
        \left(\sum_{v\in S_t}K_r^{(t)}(k(v))\right)
        -
        \sum_{v\in S_t}\eta(v)K_r^{(t)}(k(v)).
\]
The first factor is $M_0^t$, and the last sum is $M_r^t$.  After using $M_0^t=0$, this is $-M_r^t$.  The principal chain has $r=l$ and gives a non-zero multiple of $M_l^t$; every other surviving chain has even $r<l$ and gives a previous even moment.  This proves \textup{(iii)}.

It remains to handle $t=1$.  For $P_1^1$, the profile lemma gives only the principal profile $1\to0$, and Lemma \ref{lem:diam4local} gives a non-zero multiple of $M_1^1$.

For $Y^1$, first consider the profiles in which one $S_1$-branch is deleted completely and another $S_1$-branch is shortened from $1$ to $0$.  By Lemma \ref{lem:diam4local}, the deleted branch contributes $K_1^{(1)}(k)$, regardless of whether it is of type $A$ or type $B$.  The shortened branch contributes $K_1^{(1)}(k)$ in the $A$-case and $-K_1^{(1)}(k)$ in the $B$-case.  Thus this part of the signed sum is a non-zero constant times
\[
        \sum_{\substack{a,v\in S_1\\a\neq v}}
        K_1^{(1)}(k(a))\eta(v)K_1^{(1)}(k(v)).
\]
Since $K_1^{(1)}(0)=1$ and $K_1^{(1)}(1)=-1$, we have $(K_1^{(1)}(k))^2=1$.  Therefore the last display is
\[
        N_1^1M_1^1-M_0^1.
\]
The only remaining profiles are those in which one $S_0$-branch is deleted and two $S_1$-branches are shortened.  The deleted $S_0$-branch contributes $+1$ for $A_0^0$ and $-1$ for $B_0^0$, so the total contribution of all such profiles is a multiple of $M_0^0=0$.  This proves \textup{(iv)}.
\end{proof}

\begin{theorem}\label{thm:largecore}
    Let $T$ be an oriented tree of diameter four, with central vertex $u$, and suppose that
    \[
        \sum_{t\geq1}|S_t|\geq4.
    \]
    Then $T$ is converse invariant if, and only if, $T$ is self-converse.
\end{theorem}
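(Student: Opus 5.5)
The ``if'' direction is immediate, since every self-converse oriented graph is converse invariant. For the converse, assume $T$ is converse invariant with $p(T)\geq4$. By the coefficient criterion every $c_H$ vanishes, and the plan is to prove by induction on $t\geq0$ that $T_t$ is self-converse, i.e.\ $a_k^t=b_k^t$ for all $k$. Since $T$ is self-converse exactly when every $T_t$ is, this finishes the proof.

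First I reduce the condition on $T_t$ to the vanishing of the moments $M_r^t$. The vector $(a_k^t-b_k^t)_{k=0}^t$ is determined by its transforms $M_r^t=\sum_k(a_k^t-b_k^t)K_r^{(t)}(k)$ for $0\leq r\leq t$. This is because the matrix $(K_r^{(t)}(k))_{r,k}$ is the Krawtchouk matrix. Its generating function $\sum_r K_r^{(t)}(k)z^r=(1-z)^k(1+z)^{t-k}$ is a basis of polynomials of degree at most $t$, since substituting $w=(1-z)/(1+z)$ turns these into $(1+z)^t w^k$. Hence the matrix is invertible, and it suffices to show $M_r^t=0$ for all $0\leq r\leq t$.

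The induction step applies Lemma~\ref{lem:coefficientshtl} at level $t$, assuming $T_s$ is self-converse for all $s<t$.
\begin{itemize}
\item \emph{Case $t=0$.} Here $c_{Z^0}=\lambda_0M_0^0=0$, so $M_0^0=0$, which already gives $a_0^0=b_0^0$.
\item \emph{Case $t\geq2$.} Part (i) gives $M_0^t=0$. By induction on odd $l$, part (ii) is triangular with nonzero leading coefficient $\lambda_{t,l}$, so it kills every odd $M_l^t$. By induction on even $l\geq2$, part (iii) with $\mu_{t,l}\neq0$ kills every even $M_l^t$.
\item \emph{Case $t=1$.} There is no $Z^1$, so I use part (iv). From $c_{P_1^1}=0$ we get $M_1^1=0$. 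Then $c_{Y^1}=\mu(N_1^1M_1^1-M_0^1)=0$ yields $M_0^1=0$.
\end{itemize}
If some $S_t$ is empty, the corresponding test graphs are undefined, but then $T_t$ is trivially self-converse. For $t=1$ with $S_1\neq\emptyset$, the graph $Y^1$ exists because $|S_1|$ is even by Lemma~\ref{lem:evennumberofedges}, so $|S_1|\geq2$.

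The main obstacle is making sure the hypothesis $p(T)\geq4$ is what validates Lemma~\ref{lem:diam4anchor}, and hence all the coefficient formulas. It guarantees that the central component of each test graph keeps diameter four, so every copy is rooted at $u$ and the inner isomorphism sums are nonzero constants. A further point to check is that the constants $\lambda$ and $\mu$ for different rooted profiles cannot conspire to cancel the leading moment. This holds because only the principal profile carries the top moment $M_l^t$, while the other surviving profiles carry lower moments already shown to vanish. So the triangular elimination goes through, all $M_r^t$ vanish, and each $T_t$ is self-converse.
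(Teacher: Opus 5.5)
Your proposal is correct and follows the paper's proof essentially step for step. It uses the same induction on $t$, the same test graphs $Z^t$, $P_l^t$, $Q_l^t$ and (for $t=1$) $P_1^1$ then $Y^1$ via Lemma~\ref{lem:coefficientshtl}, and the same triangular elimination of the moments $M_l^t$. The only difference is cosmetic: you invert the Krawtchouk matrix through the independence of $(1-z)^k(1+z)^{t-k}=(1+z)^t w^k$, whereas the paper notes that $K_l^{(t)}$ has degree exactly $l$ in $x$ and then uses Lagrange interpolation.
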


\begin{proof}
If $T$ is self-converse, then it is clearly converse invariant.  Conversely, suppose that $T$ is converse invariant.  Then every coefficient of the polynomial $P_T$ is zero.  By Lemma \ref{lem:evennumberofedges}, $T$ has an even number of edges, and hence all the test graphs above have an odd number of edges.  Therefore their coefficients occur in $P_T$ and vanish.

We prove by induction on $t$ that $T_t$ is self-converse.  Equivalently, we prove that $a_k^t=b_k^t$ for every $0\leq k\leq t$.

First consider $t=0$.  If $S_0=\emptyset$, there is nothing to prove.  If $S_0\neq\emptyset$, then $c_{Z^0}=0$, and Lemma \ref{lem:coefficientshtl} gives
\[
        M_0^0=a_0^0-b_0^0=0.
\]
Thus $T_0$ is self-converse.

Now consider $t=1$.  If $S_1=\emptyset$, there is nothing to prove.  If $S_1\neq\emptyset$, then $c_{P_1^1}=0$, and Lemma \ref{lem:coefficientshtl} gives
\[
        M_1^1=0.
\]
Also $c_{Y^1}=0$.  Since $M_0^0=0$ has already been proved and $M_1^1=0$, Lemma \ref{lem:coefficientshtl} gives
\[
        M_0^1=0.
\]
Hence both moments $M_0^1$ and $M_1^1$ vanish.

Let now $t\geq2$, and suppose that $T_s$ is self-converse for every $s<t$.  If $S_t=\emptyset$, there is nothing to prove.  Assume $S_t\neq\emptyset$.  From $c_{Z^t}=0$ and Lemma \ref{lem:coefficientshtl}, we obtain
\[
        M_0^t=0.
\]
We next prove $M_l^t=0$ for $1\leq l\leq t$ by induction on $l$.  If $l$ is odd, then $c_{P_l^t}=0$ and Lemma \ref{lem:coefficientshtl} expresses $c_{P_l^t}$ as a non-zero multiple of $M_l^t$ plus moments $M_r^t$ with $r<l$, which are already known to vanish.  Hence $M_l^t=0$.  If $l$ is even, then $c_{Q_l^t}=0$ and Lemma \ref{lem:coefficientshtl}, using $M_0^t=0$, expresses $c_{Q_l^t}$ as a non-zero multiple of $M_l^t$ plus even moments $M_r^t$ with $0<r<l$, which are already known to vanish.  Hence again $M_l^t=0$.

Thus, for every $t$ with $S_t\neq\emptyset$, we have
\[
        M_l^t=0\qquad(0\leq l\leq t).
\]
It remains only to translate these moment equations into the equalities $a_k^t=b_k^t$.  The identity
\[
        K_l^{(t)}(x)=[z^l](1-z)^x(1+z)^{t-x}
\]
shows that $K_l^{(t)}(x)$ is a polynomial in $x$ of degree exactly $l$, with leading coefficient $(-2)^l/l!$.  Therefore $K_0^{(t)},K_1^{(t)},\dots,K_t^{(t)}$ form a basis of the vector space of polynomials of degree at most $t$.  Since
\[
        0=M_l^t=\sum_{k=0}^t(a_k^t-b_k^t)K_l^{(t)}(k)
        \qquad(0\leq l\leq t),
\]
we have
\[
        \sum_{k=0}^t(a_k^t-b_k^t)f(k)=0
\]
for every polynomial $f$ of degree at most $t$.  Taking $f$ to be the Lagrange polynomial which is $1$ at $j$ and $0$ at the other points of $\{0,1,\dots,t\}$ gives $a_j^t-b_j^t=0$.  Hence $a_j^t=b_j^t$ for every $j\in\{0,\dots,t\}$.

By induction, $T_t$ is self-converse for every $t\geq0$.  Hence $T$ itself is self-converse.
\end{proof}

It remains to treat the case $p(T)<4$.

\begin{lemma}\label{lem:remaining-shape}
Let $T$ be an orientation of a non-path tree of diameter four, with center
$u$.  Suppose that
$$
        \sum_{r\geq 1}|S_r|<4 .
$$
Then there is a unique integer $t\geq 1$ such that
$$
        S_t=\{x,y\},
        \qquad
        S_r=\emptyset \quad\text{for every }r\geq1,\ r\neq t .
$$
In particular, if $s=|S_0|$, then the underlying tree consists of the path
$xuy$, together with $t$ leaves adjacent to $x$, $t$ leaves adjacent to
$y$, and $s$ leaves adjacent to $u$.

\end{lemma}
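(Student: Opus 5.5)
The plan is to combine the trivial lower bound on $p(T)$ coming from the diameter with the parity information of Lemma~\ref{lem:evennumberofedges}. Throughout I use the standing convention of this subsection that $T$ is converse invariant. Without that assumption the statement fails: for instance, branches of sizes $1$ and $2$ are not excluded by anything else.

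First I show $p(T)=\sum_{r\geq1}|S_r|\geq 2$. Since $\UG(T)$ has diameter four and center $u$, there is a path $v_0v_1uv_3v_4$ of length four with $u$ as its middle vertex. Its neighbors $v_1$ and $v_3$ of $u$ are distinct and each has a further neighbor ($v_0$, respectively $v_4$) other than $u$. In a diameter-four tree rooted at its center, every vertex at distance two from $u$ is a leaf. Hence $v_1$ and $v_3$ are non-leaf neighbors of $u$, lying in $S_r$ and $S_{r'}$ for some $r,r'\geq1$. This gives $p(T)\geq2$, so by hypothesis $p(T)\in\{2,3\}$.

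Next, Lemma~\ref{lem:evennumberofedges} says that $|S_r|$ is even for every $r\geq0$. In particular $p(T)$ is a sum of even numbers, hence even, and so $p(T)=2$. Moreover, a sum of nonnegative even integers equals $2$ only if exactly one summand equals $2$ and all the others vanish. So there is a unique $t\geq1$ with $|S_t|=2$, say $S_t=\{x,y\}$, and $S_r=\emptyset$ for every $r\geq1$ with $r\neq t$. (The case $p(T)=2$ was treated explicitly at the end of the proof of Lemma~\ref{lem:evennumberofedges}, where it was shown that $r=s$; that argument is exactly what is being invoked here.)

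Finally, I read off the shape of the tree. Every neighbor of $u$ lies in some $S_r$, so the neighbors of $u$ are $x$, $y$ and the $s=|S_0|$ leaves in $S_0$. Since every vertex at distance two from $u$ is a leaf, each of $x$ and $y$ is adjacent to exactly $t$ leaves, and there are no further vertices. This gives the path $xuy$ with $t$ leaves at $x$, $t$ at $y$ and $s$ at $u$.

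There is no real obstacle in this argument. The only point needing care is that the whole burden lies in Lemma~\ref{lem:evennumberofedges}, so the converse-invariance hypothesis must be made explicit. The non-path hypothesis is not actually needed for the conclusion; it only excludes the degenerate case $t=1$, $s=0$, in which the tree is the path itself.
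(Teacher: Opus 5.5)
Your proof is correct and follows the same route as the paper: the diameter forces $p(T)\geq 2$, Lemma~\ref{lem:evennumberofedges} makes every $|S_r|$ even, so $p(T)=2$ and both non-leaf neighbours of $u$ lie in a single $S_t$. You are also right that converse invariance has to be assumed. The paper supplies it through the standing convention at the start of the subsection.
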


\begin{proof}
Since the diameter is four, there are at least two neighbors of $u$ which
are not leaves.  Thus $\sum_{r\geq1}|S_r|\geq2$.  By Lemma
\ref{lem:evennumberofedges}, each $|S_r|$ is even.  Hence, under the
assumption $\sum_{r\geq1}|S_r|<4$, we must have
$$
        \sum_{r\geq1}|S_r|=2 .
$$
Again because every $|S_r|$ is even, these two vertices must lie in a single
set $S_t$, with $t\geq1$.  This proves the claim.
\end{proof}

\medskip

For the rest of this paragraph we assume the notation of Lemma
\ref{lem:remaining-shape}.  We write
$$
        X=N(x)\setminus\{u\},\qquad
        Y=N(y)\setminus\{u\},\qquad
        U=N(u)\setminus\{x,y\}.
$$
Thus $|X|=|Y|=t$ and $|U|=s$.  Let
$$
        o_x=|\{z\in X:x\to z\}|,
        \qquad
        o_y=|\{z\in Y:y\to z\}|,
        \qquad
        o_u=|\{z\in U:u\to z\}|.
$$
Finally, put
$$
        \epsilon_x=
        \begin{cases}
        1,& u\to x,\\
        -1,& x\to u,
        \end{cases}
        \qquad
        \epsilon_y=
        \begin{cases}
        1,& u\to y,\\
        -1,& y\to u.
        \end{cases}
$$

\begin{lemma}\label{lem:diam4-two-branch-equations}
Let $T$ be converse invariant and let $S_t=\{x,y\}$ be as above.  Then
\begin{equation}\label{eq:two-branch-basic}
        2o_u=s
        \qquad\text{and}\qquad
        o_x+o_y=t.
\end{equation}
Moreover, if $s\neq t$, then
\begin{equation}\label{eq:two-branch-central}
        \epsilon_x+\epsilon_y=0 .
\end{equation}
\end{lemma}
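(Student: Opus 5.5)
The plan is to read off each identity from the vanishing coefficient $c_H$ of a suitable test graph $H$, as in Lemmas \ref{lem:evennumberofedges} and \ref{lem:coefficientshtl}. By Lemma \ref{lem:evennumberofedges} the tree has $2+2t+s$ edges, an even number, so $s$ is even. Every $H$ obtained from $T_\out$ by deleting one edge (or an odd number of edges) therefore has an odd number of edges and appears in $P_T$. For each such $H$ I will
\begin{enumerate}
\item list the copies of $\UG(H)$ in $\UG(T)$, using degree multisets of the non-leaf vertices;
\item observe that the inner isomorphism sum for each copy is a non-zero constant, by Lemmas \ref{lem:constantsignstar} and \ref{lem:constantsigneventree} or because the non-leaf vertices have distinct degrees;
\item compute the relative sign of each copy with Lemma \ref{lem:diam4local}.
\end{enumerate}

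\emph{The equation $o_x+o_y=t$.} Let $H$ be $T_\out$ with one leaf-edge at $x$ deleted. The copies of $\UG(H)$ come from deleting one leaf-edge at $x$ or at $y$. Since the central edge is kept, Lemma \ref{lem:diam4local}(i) with $r=1$ shows that a deletion at $x$ contributes $\pm1$ according to the direction of that leaf-edge; the $\epsilon_x$ sign cancels. The swap $x\leftrightarrow y$ shows that the multiplicative constants agree for the two sides, so
\[
c_H\propto (2o_x-t)+(2o_y-t).
\]
Hence $o_x+o_y=t$.

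\emph{The equation $2o_u=s$.} Let $H'$ be $T_\out$ with one $u$-leaf deleted; this needs $s\geq 2$, and $s=0$ is trivial. Generically the only copies delete a $u$-leaf, which gives $c_{H'}\propto 2o_u-s$. The degree multisets $\{s+1,t+1,t+1\}$ and $\{s+2,t,t+1\}$ coincide exactly when $t=s+1$. In that exceptional case the copies deleting an $x$- or $y$-leaf also appear, with the center moving. They contribute a multiple of $(2o_x-t)+(2o_y-t)$, which already vanishes by the previous step. So in all cases $c_{H'}\propto 2o_u-s$, and $2o_u=s$. I would treat this coincidence explicitly, since it is where a naive ``unique copy type'' argument breaks down.

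\emph{The equation $\epsilon_x+\epsilon_y=0$ when $s\ne t$.} Let $H''$ be $T_\out$ with the edge $ux$ deleted. Its copies delete $ux$ or $uy$; a leaf-edge deletion would isolate a vertex. One component is the star $K_{1,t}$. The other is a double star with centers of degrees $s+1$ and $t+1$, which are distinct because $s\neq t$, so its automorphisms only permute leaves. By Lemma \ref{lem:diam4local}(ii) the two copies contribute $\epsilon_x$ and $\epsilon_y$ with equal constants, so $c_{H''}\propto\epsilon_x+\epsilon_y$.

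The main obstacle I expect is $t=1$. Then the detached component is a single edge, whose two isomorphisms cancel, so $c_{H''}=0$ automatically. For $t=1$ I would instead use an odd three-edge deletion: remove the whole branch at $x$ (the edge $ux$ and its leaf-edge) together with one $u$-leaf. The $u$-leaf deletion contributes a factor proportional to $2o_u-s=0$ only partially, so this case needs care. If that fails, I would try removing the $x$-branch and keeping $y$, and then use the relations already obtained ($o_x+o_y=1$ and $2o_u=s$) to isolate the term $\epsilon_x+\epsilon_y$ in the resulting expression.
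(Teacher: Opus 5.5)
You handle $o_x+o_y=t$, $2o_u=s$, and $\epsilon_x+\epsilon_y=0$ for $t\ge2$ exactly as the paper does: the same three test graphs and the same sign bookkeeping. One minor correction: the case $t=s+1$ in the $2o_u=s$ step is not exceptional. For $t\ge2$, deleting a $u$-leaf or an $x$-leaf leaves a diameter-four tree whose center $u$ has degree $s+1$, respectively $s+2$. These two trees are never isomorphic. For $t=1$, deleting the $x$-leaf drops the diameter to three. So equal degree multisets produce no extra copies, and your conclusion stands.

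The genuine gap is \eqref{eq:two-branch-central} for $t=1$. This case cannot be avoided, because $s$ is even and so $s\neq t$ automatically. Your diagnosis is right: $T_\out-ux$ then has a $K_{1,1}$ component whose endpoint swap makes its coefficient vanish identically. The paper's proof uses exactly this test graph and overlooks the issue. Neither of your remedies works, however.
\begin{itemize}
\item Deleting the $x$-branch plus one $u$-leaf fails. For $s=2$ this gives a three-edge path, whose coefficient vanishes identically by the reflection argument in the proof of Theorem~\ref{thm:paths-cycles}. For $s\ge4$, every copy is centered at $u$ with its two-edge leg through some $w\in\{x,y\}$. The signed count is $\tfrac{s}{2}(-1)^{s/2-1}\epsilon_x\epsilon_y(\tau_x+\tau_y)$, where $\tau_w=\pm1$ according to whether the leaf-edge at $w$ points away from or towards $w$. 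Since $o_x+o_y=1$ forces $\tau_x+\tau_y=0$, this is identically zero and says nothing about $\epsilon_x+\epsilon_y$.
\item Deleting only the $x$-branch removes two edges and leaves an even number, so there is no coefficient to use.
\end{itemize}

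The missing idea is the three-edge star. When $t=1$, the vertices $x$ and $y$ have degree $2$, so every copy of $K_{1,3}$ in $\UG(T)$ is centered at $u$. By Lemma~\ref{lem:constantsignstar} and $2o_u=s$, its coefficient is a non-zero multiple of
\[
[z^3]\,(1+\epsilon_x z)(1+\epsilon_y z)(1-z^2)^{s/2}=-\tfrac{s}{2}(\epsilon_x+\epsilon_y).
\]
Since $s$ is even, and $s=0$ with $t=1$ would make $T$ a path, we have $s\ge2$. Hence $\epsilon_x+\epsilon_y=0$.
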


\begin{proof}
We use the coefficient formula from Section~\ref{sec:prelim}: if $H$ has an
odd number of edges, then
$$
        c_H=2(n-|V(H)|)!\sum_S\sum_\alpha
        \prod_{ij\in E(S)}y_{ij}(-1)^{(ij)_\alpha},
$$
and, since $T$ is converse invariant, every such coefficient is zero.

First suppose that $s>0$.  Let $H$ be the oriented graph obtained from
$T_\out$ by deleting one edge from $u$ to a vertex of $S_0$.  Every copy of
$\UG(H)$ in $\UG(T)$ is obtained by deleting one edge incident with a vertex
of $S_0$.  Indeed, deleting any other edge either disconnects one of the two
non-leaf branches or changes the number of non-leaf branches.  For each fixed
copy, all isomorphisms have the same sign by Lemma
\ref{lem:constantsigneventree}.  Hence, up to a non-zero constant independent of
which edge is deleted, the coefficient $c_H$ is
$$
        o_u-(s-o_u).
$$
Since $c_H=0$, we get $2o_u=s$.  If $s=0$, the same identity is trivial.
This proves the first identity in \eqref{eq:two-branch-basic}.

Next let $H$ be obtained from $T_\out$ by deleting one leaf-edge incident
with one of the two vertices in $S_t$.  A copy of $\UG(H)$ is obtained by
choosing one of the two branches, say the branch centered at $w\in\{x,y\}$,
and then deleting one of the $t$ leaf-edges incident with $w$.  The sign
change contributed by the deleted edge is positive if the deleted edge is
oriented away from $w$, and negative if it is oriented towards $w$.  Thus
the total contribution of the branch at $w$ is
$$
        o_w-(t-o_w)=2o_w-t.
$$
Therefore, up to a non-zero constant,
$$
        c_H=(2o_x-t)+(2o_y-t).
$$
Since $c_H=0$, this gives $o_x+o_y=t$, proving the second identity in \eqref{eq:two-branch-basic}.

It remains to prove \eqref{eq:two-branch-central}.  Assume $s\neq t$, and let
$H$ be obtained from $T_\out$ by deleting the edge from $u$ to one of the
vertices in $S_t$.  The graph $\UG(H)$ has one detached $t$-star and one
component containing the edge between $u$ and the other vertex of $S_t$.  In
that latter component the two non-leaf vertices have respectively $s$ and
$t$ leaf-neighbors, so they are distinguishable because $s\neq t$.  Hence
every copy is one of the two principal copies, obtained by deleting either
$ux$ or $uy$.  Again Lemma \ref{lem:constantsigneventree} shows that, for a
fixed copy, the inner sum over isomorphisms is a non-zero constant multiple of
the sign contributed by the deleted central edge.  This sign is $+1$ if the
deleted edge is oriented away from $u$, and $-1$ if it is oriented towards
$u$.  Consequently,
$$
        c_H=C(\epsilon_x+\epsilon_y)
$$
for some non-zero constant $C$.  Since $c_H=0$, we obtain
$\epsilon_x+\epsilon_y=0$.
\end{proof}

\begin{lemma}\label{lem:diam4-symmetric-two-branch}
Assume that $T$ is converse invariant, that $S_t=\{x,y\}$, and that
$s=t$.  If the two central edges have the same direction with respect to $u$, then $t$ is even
and
$$
        o_u=o_x=o_y=t/2 .
$$
\end{lemma}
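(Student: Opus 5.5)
Up to replacing $T$ by its converse, which changes neither the hypotheses nor the conclusion, I assume $u\to x$ and $u\to y$, so $\epsilon_x=\epsilon_y=1$. The parity claim is immediate: the underlying tree has $3t+2$ edges, and Lemma~\ref{lem:evennumberofedges} says this number is even, so $t$ is even. With $s=t$, the identity $2o_u=s$ from Lemma~\ref{lem:diam4-two-branch-equations} gives $o_u=t/2$, and the same lemma gives $o_x+o_y=t$. So everything reduces to proving $o_x=o_y$, or equivalently $o_x=t/2$.

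The obvious test graphs give nothing here, because their coefficients vanish identically. Deleting a single central edge produces a double star whose two centres both have $t$ leaves. Its automorphism swapping the centres gives, by Corollary~\ref{cor:signout}, signs $(-1)^{R}$ and $(-1)^{R+1}$ for the same quantity $R$, so the inner sum is $0$. Deleting one leaf at $x$ and two at $y$ (or the symmetric choices) gives an expression antisymmetric under $o\mapsto t-o$, which is killed by $o_x+o_y=t$. So I need a test graph that breaks the $x$/$y$ versus $u$ symmetry.

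I would use, for even $j$ with $2\leq j\leq t$, the graph $H_j$ obtained from $T_\out$ by deleting the central edge $ux$ together with $j$ leaf-edges at $y$. It has $3t+1-j$ edges, which is odd. Its components are a detached star $K_{1,t}$ and a double star whose centres have $t$ and $t-j$ leaves; these are distinguishable, so every isomorphism is rooted at the $t$-leaf centre and Lemmas~\ref{lem:constantsignstar} and~\ref{lem:constantsigneventree} apply. The copies of $\UG(H_j)$ in $\UG(T)$ come in four types. In type (a) one deletes $ux$ and $j$ leaves of $y$; type (b) is the same with the roles of $x$ and $y$ swapped. In type (c) one deletes $ux$ and $j$ leaves of $u$, so that the root of the double star is $y$ and the central edge now points \emph{into} the root; type (d) is type (c) with $uy$ deleted instead. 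Using Corollary~\ref{cor:signout} and Lemma~\ref{lem:diam4local}, I would write
\[
c_{H_j}\;\propto\;(-1)^{o_x+o_u}G_j(o_y)+(-1)^{o_y+o_u}G_j(o_x)-2(-1)^{o_x+o_y}G_j(o_u).
\]
Here $G_j(o)$ is, up to a sign depending only on $t$ and $j$, the Krawtchouk value $K^{(t)}_j(\,\cdot\,)$ at the number of inward leaves. The minus sign on the last term comes from the extra inward central edge in types (c) and (d).

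Next I substitute $o_u=t/2$ and $o_y=t-o_x$, and use the symmetry $K^{(t)}_j(t-k)=(-1)^jK^{(t)}_j(k)=K^{(t)}_j(k)$ for even $j$. The equations $c_{H_j}=0$ then become
\[
(-1)^{o_x+t/2}\,K_j^{(t)}(o_x)=(-1)^{t}K_j^{(t)}(t/2)
\]
for every even $j$, with signs still to be double-checked. The main obstacle is extracting $o_x=t/2$ from this family of equations. The case $j=0$ holds trivially, so the information comes only from even $j\geq2$. My plan is to use the generating function $K^{(t)}_j(k)=[z^j](1-z)^k(1+z)^{t-k}$ and take the even part in $z$. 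This turns the family into a single identity between the even parts of $(1-z)^{k}(1+z)^{t-k}$ at $k=o_x$ and at $k=t/2$, where the latter equals $(1-z^2)^{t/2}$. Comparing these, for instance at $z=1$ or through their zeros, should force $k=t/2$. If that comparison is not decisive, I would enlarge the family: delete $ux$, $j$ leaves at $y$ and $i$ leaves at $u$, and use the two-parameter system to reach full Krawtchouk information, as in the basis argument of Theorem~\ref{thm:largecore}. Once $o_x=t/2$, the identity $o_x+o_y=t$ gives $o_y=t/2$ as well.
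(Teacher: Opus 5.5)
Your test graphs $H_j$ are a legitimate alternative to the paper's argument, and your enumeration of the four copy types (a)--(d) is correct. However, the sign bookkeeping is wrong at exactly the decisive point, and as written the final step fails.

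\textbf{The sign error.} Lemma~\ref{lem:diam4local}(i) gives the signed sum over deleted leaves \emph{relative to the full branch}. So in type (a) the absolute contribution of the shortened branch at $y$ is $(-1)^{t-o_y}K_j^{(t)}(t-o_y)$. It is not a sign depending only on $t,j$ times $K_j^{(t)}(t-o_y)$. Carrying this factor through, all four types get the same prefactor (recall $t$ is even), and
\[
c_{H_j}=C\,(-1)^{o_x+o_y+o_u}\Bigl(K_j^{(t)}(t-o_x)+K_j^{(t)}(t-o_y)-2K_j^{(t)}(t-o_u)\Bigr),\qquad C\neq0 .
\]

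\textbf{Why your relation cannot be completed.} Take $t=2$, $o_u=1$, $o_x=0$, $o_y=2$. The only admissible value is $j=2$, and your relation $(-1)^{o_x+t/2}K_j^{(t)}(o_x)=K_j^{(t)}(t/2)$ reads $-K_2^{(2)}(0)=-1=K_2^{(2)}(1)$. So this orientation survives, although it is not converse invariant. A direct count confirms the corrected formula: each of the four copies of $\UG(H_2)$ has sign $-1$, so $c_{H_2}\neq0$. Your generating-function step, where the $z^0$ coefficients agree trivially, in fact presupposes the sign-free version.

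\textbf{Completing the argument.} Once the signs are fixed, the extraction you call the main obstacle is immediate. Use $o_y=t-o_x$, $o_u=t/2$, and $K_j^{(t)}(t-k)=K_j^{(t)}(k)$ for even $j$. The equations become $K_j^{(t)}(o_x)=K_j^{(t)}(t/2)$, and already $j=2$ suffices, since
\[
K_2^{(t)}(k)=\tfrac12\bigl((t-2k)^2-t\bigr),
\]
which forces $o_x=t/2$. Comparing even parts at $z=1$, as you suggest, would not be decisive: it only excludes $o_x\in\{0,t\}$. Comparing orders of vanishing at $z=\pm1$ would work.

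\textbf{Comparison with the paper.} With this correction your route is sound but different from the paper's. The paper uses the outward star $K_{1,3}$ as its test graph. Its copies are just the $3$-stars centred at $u,x,y$, and the identity
\[
K_3^{(2h+2)}(h)+K_3^{(2h+1)}(2h+1-a)+K_3^{(2h+1)}(a+1)=-4(a-h)^2
\]
finishes at once. The paper's choice avoids enumerating copies of a large forest but needs a cubic Krawtchouk computation. Yours needs the four-type enumeration but only the quadratic $K_2^{(t)}$. Both rely on $o_u=t/2$ and $o_x+o_y=t$ from Lemma~\ref{lem:diam4-two-branch-equations}.
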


\begin{proof}
It is enough to consider the case
$$
        u\to x,
        \qquad
        u\to y,
$$
since the other case is obtained by replacing $T$ with its converse.
By Lemma \ref{lem:diam4-two-branch-equations},
$$
        o_u=t/2
        \qquad\text{and}\qquad
        o_x+o_y=t.
$$
In particular, $t$ is even.  Write $t=2h$, so that $o_u=h$.  We shall
prove that $o_x=o_y=h$.

Let $H$ be an outward oriented star with three edges.  Since $T$ is
converse invariant, the coefficient $c_H$ is zero.  We compute this
coefficient by summing over all copies of a three-edge star in $\UG(T)$.
The only possible centers of such copies are $u,x,y$.  By Lemma
\ref{lem:constantsignstar}, the sum over the isomorphisms of a fixed copy is a
constant independent of the order in which the leaves are mapped.  Thus, after
multiplication by one fixed non-zero constant, the coefficient is the sum of
the signed numbers of three-edge stars centered at $u,x,y$.

We use the following notation.  If a vertex has total degree $d$ and exactly
$i$ incident edges pointing into it, then the signed contribution of the
three-edge stars centered at that vertex is
$$
        K_3^{(d)}(i)
        =
        \sum_{q=0}^3(-1)^q\binom{i}{q}\binom{d-i}{3-q}.
$$
In the present case the total degrees and incoming degrees are
$$
        d(u)=2h+2,
        \qquad
        i(u)=h,
$$
and
$$
        d(x)=d(y)=2h+1,
        \qquad
        i(x)=2h+1-o_x,
        \qquad
        i(y)=2h+1-o_y.
$$
Since $o_y=2h-o_x$, putting $a=o_x$ gives
$$
        i(x)=2h+1-a,
        \qquad
        i(y)=a+1.
$$
Therefore
$$
        0=c_H/C
        =K_3^{(2h+2)}(h)
        +K_3^{(2h+1)}(2h+1-a)
        +K_3^{(2h+1)}(a+1),
$$
where $C\neq0$.  A direct calculation gives
$$
        K_3^{(2h+2)}(h)
        +K_3^{(2h+1)}(2h+1-a)
        +K_3^{(2h+1)}(a+1)
        =-4(a-h)^2.
$$
Hence $a=h$.  Thus $o_x=h$, and since $o_x+o_y=2h$, also $o_y=h$.
This proves the lemma.
\end{proof}

\begin{proposition}\label{prop:diam4-remaining-case}
Let $T$ be a converse invariant orientation of a non-path tree of diameter
four, and suppose that
$$
        \sum_{r\geq1}|S_r|=2.
$$
Then either $T$ is self-converse, or the following exceptional situation
holds.  There is an even integer $t=2h$ such that
$$
        S_t=\{x,y\},
        \qquad
        |S_0|=t,
$$
every one of the three vertices $u,x,y$ has exactly $h$ outgoing pendant
edges and exactly $h$ incoming pendant edges, and the two central edges are
both directed away from $u$, or both directed towards $u$.
\end{proposition}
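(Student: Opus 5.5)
We work in the setting of Lemma~\ref{lem:remaining-shape}: $S_t=\{x,y\}$ for a unique $t\geq1$, and $s=|S_0|$. Lemma~\ref{lem:diam4-two-branch-equations} gives $2o_u=s$ and $o_x+o_y=t$. We split into two cases according to whether $s\neq t$ or $s=t$.

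\emph{Case $s\neq t$.} Here \eqref{eq:two-branch-central} gives $\epsilon_x=-\epsilon_y$, so we may assume $u\to x$ and $y\to u$. We claim that the map exchanging the branches at $x$ and $y$ and reversing all arcs is an isomorphism $T\to\overline T$. For this we need two facts.
\begin{itemize}
\item[(a)] The pendant edges at $u$ are balanced. This is exactly $2o_u=s$.
\item[(b)] The number of out-leaves at $x$ equals the number of in-leaves at $y$, i.e.\ $o_x=t-o_y$. This is exactly $o_x+o_y=t$.
\end{itemize}
Under the reversal-and-swap map, $u\to x$ becomes $y\to u$ and vice versa. By (b), the out-leaves of $x$ correspond to the in-leaves of $y$, and by (a) the star at $u$ is self-converse. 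Hence $T\cong\overline T$, so $T$ is self-converse. I would write this isomorphism out explicitly; it is routine.

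\emph{Case $s=t$, central edges of opposite directions.} The argument of the previous case applies verbatim, since it only used $\epsilon_x+\epsilon_y=0$ together with \eqref{eq:two-branch-basic}. So $T$ is again self-converse.

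\emph{Case $s=t$, central edges of the same direction.} Lemma~\ref{lem:diam4-symmetric-two-branch} gives that $t=2h$ is even and $o_u=o_x=o_y=h$. This is precisely the exceptional situation in the statement: $|S_0|=t=2h$, each of $u,x,y$ carries $h$ outgoing and $h$ incoming pendant edges, and $u$ is a source or sink on the path $xuy$.

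The main obstacle is not in this case analysis. All the substantive coefficient work was already done in Lemmas~\ref{lem:diam4-two-branch-equations} and~\ref{lem:diam4-symmetric-two-branch}. Within this proof, the only care needed is the following.
\begin{itemize}
\item Check that the case $s=t$ with opposite central directions is genuinely covered by the self-converse construction. It is: that construction never needs $s\neq t$, which was only required to derive $\epsilon_x+\epsilon_y=0$.
\item Verify the isomorphism carefully, since leaf orientations at $x$ are measured relative to $x$ (the quantity $o_x$). Reversing all arcs turns the $o_x$ out-leaves of $x$ into $o_x$ in-leaves. These must match the $t-o_y$ in-leaves of $y$, which is exactly identity (b).
\end{itemize}
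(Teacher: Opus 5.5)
Your proposal is correct and follows the paper's proof. The paper shows the opposite-direction case is self-converse by pairing the two branches via exactly $2o_u=s$ and $o_x+o_y=t$, reduces $s\neq t$ to that case with \eqref{eq:two-branch-central}, and hands $s=t$ with equal central directions to Lemma~\ref{lem:diam4-symmetric-two-branch}; you do the same, only splitting on $s$ versus $t$ before the central directions.

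One caveat applies to both you and the paper. For $t=1$ (where $s$ is even, so always $s\neq t$), the deleted-central-edge coefficient used to prove \eqref{eq:two-branch-central} vanishes identically, because the detached component is a single edge. There the identity should instead come from the three-edge out-star coefficient. That coefficient is a non-zero multiple of $K_3^{(s+2)}(s/2)=-s$ or $K_3^{(s+2)}(s/2+2)=s$ when both central edges point away from or towards $u$, and it is non-zero since $s\geq2$ for a non-path.
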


\begin{proof}
Let $S_t=\{x,y\}$, and put $s=|S_0|$.  By Lemma
\ref{lem:diam4-two-branch-equations}, we have
$$
        2o_u=s
        \qquad\text{and}\qquad
        o_x+o_y=t.
$$

Assume first that the two central edges have opposite directions, say
$u\to x$ and $y\to u$.  Then the branch at $x$ belongs to
$A_{t-o_x}^t$, while the branch at $y$ belongs to $B_{o_y}^t$.  The
identity $o_x+o_y=t$ is precisely
$$
        t-o_x=o_y.
$$
Thus the two branches in $S_t$ are paired by converse.  Also
$2o_u=s$, so the leaves in $S_0$ are paired by converse.  Hence $T$ is
self-converse.

Consequently, if $T$ is not self-converse, the two central edges have the
same direction, unless this is ruled out by the coefficient computation.  If
$s\neq t$, Lemma \ref{lem:diam4-two-branch-equations} gives
$\epsilon_x+\epsilon_y=0$, so the central edges have opposite directions,
and we are back in the self-converse case.  Hence every non-self-converse
example must satisfy $s=t$ and must have the two central edges in the same
direction.  Lemma \ref{lem:diam4-symmetric-two-branch} then gives
$$
        t=2h,
        \qquad
        o_u=o_x=o_y=h.
$$
This is exactly the exceptional configuration stated above.
\end{proof}

\begin{lemma}\label{lem:diam4-exception-converse-invariant}
The exceptional configuration of Proposition \ref{prop:diam4-remaining-case}
is converse invariant.
\end{lemma}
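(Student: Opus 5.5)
The plan is to show that every coefficient $c_H$ of $P_T$ vanishes for $T\in\mathcal E_4$, using formula \eqref{eq:coef-general}. One could try a direct bijective argument: find an involution on copies of $T$ and $\overline T$ in any tournament. A cleaner approach exploits the structure. Write $T$ as three copies of a balanced star $B_{2h}$ (center with $h$ in-leaves and $h$ out-leaves) placed at $x,u,y$, with central edges $u\to x$, $u\to y$ (the other case is its converse). The key observation is that each balanced star is self-converse via a leaf swap. Hence $\overline T$ is obtained from $T$ by reversing only the two central edges: $\overline T$ is isomorphic to the tree with balanced stars at $x,u,y$ and $x\to u$, $y\to u$. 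So it suffices to show $f_{\mathcal T}(T)=f_{\mathcal T}(T')$, where $T'$ differs from $T$ only in the orientation of the path $x-u-y$.

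For each tournament $\mathcal T$ and each ordered triple $(a,b,c)$ of distinct vertices, I would count embeddings of $T$ with $x\mapsto a$, $u\mapsto b$, $y\mapsto c$. Given those images, the remaining $6h$ leaves must be placed injectively. The count equals the number of ways to choose disjoint sets $I_a,O_a,I_b,O_b,I_c,O_c$ of size $h$ each, outside $\{a,b,c\}$, with $I_v\subseteq N^-(v)$ and $O_v\subseteq N^+(v)$. This number $g(a,b,c)$ depends only on the tournament and the triple, not on the orientation of the central edges. Then
\[
\emb_{\mathcal T}(T)=\sum_{b}\sum_{\substack{a\ne c\\ b\to a,\,b\to c}} g(a,b,c),\qquad
\emb_{\mathcal T}(T')=\sum_{b}\sum_{\substack{a\ne c\\ a\to b,\,c\to b}} g(a,b,c),
\]
and we need these to agree. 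This is the main obstacle, since $g$ is not obviously symmetric in a way that swaps in- and out-neighbours of $b$.

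For the key step I would use the polynomial method instead of a direct count, since the paper's machinery is set up for that. Take $H$ with an odd number of edges and a copy $S$ of $\UG(H)$ in $\UG(T)$. If some component of $S$ is a star with an odd number of edges lying inside one branch (a free sub-branch, or a star centred at $u,x,y$ with odd size), pair isomorphisms or copies by swapping an in-leaf with an out-leaf at the relevant centre. Because each centre is balanced, such swaps give a sign-reversing involution on the pairs $(S,\alpha)$. The copy $S$ is replaced by $S'$ with the same shape, and the changed edge direction flips the sign. This mirrors the proof of Proposition \ref{prop:stars} and of Lemma \ref{lem:diam4local}, where $K_r^{(2h)}(h)=0$ for odd $r$. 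The remaining case is when every star piece has an even number of leaf edges. Then parity forces an odd number of central edges $ux,uy$ to be used, possibly in disguise as a leaf edge at $u$. Here I would pair the copy using $ux$ with the copy using the corresponding leaf edge $uz$, $z\in U$, or exploit the symmetry $x\leftrightarrow y$ combined with the sign computation of Corollary \ref{cor:signout}.

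I expect this residual bookkeeping to be the real difficulty. It requires showing that contributions like $K_r^{(2h+1)}$ at $x$ and $y$ and $K_r^{(2h+2)}$ at $u$ cancel in total, generalizing the identity $K_3^{(2h+2)}(h)+2K_3^{(2h+1)}(h+1)=0$ from Lemma \ref{lem:diam4-symmetric-two-branch}. To handle it, I would organize copies by their traces on the three stars and use the generating function $K_r^{(d)}(i)=[z^r](1-z)^i(1+z)^{d-i}$. The total signed sum then factors through $(1-z^2)^h$-type products, whose odd-degree coefficients vanish.
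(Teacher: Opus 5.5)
\textbf{There is a genuine gap: the decisive step is left as \emph{residual bookkeeping} and is never proved.} This happens because you abandoned your direct count exactly where it closes.

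The symmetry you need is not a swap of in- and out-neighbours of $b$. It is invariance of $g$ under permutations of $(a,b,c)$, and that is automatic. All three centres carry the same balanced star ($h$ in-leaves, $h$ out-leaves), so your definition of $g(a,b,c)$ treats $a,b,c$ identically. Hence $g(a,b,c)=G(C)$ depends only on the set $C=\{a,b,c\}$. Grouping ordered triples by their underlying set gives
\[
\emb_{\mathcal T}(T)=(h!)^6\sum_{|C|=3}G(C)\,e^+(C),\qquad
\emb_{\mathcal T}(T')=(h!)^6\sum_{|C|=3}G(C)\,e^-(C).
\]
Here $e^+(C)$ (resp.\ $e^-(C)$) is the number of orderings $(a,b,c)$ of $C$ with $b\to a$ and $b\to c$ (resp.\ $a\to b$ and $c\to b$). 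The factor $(h!)^6$ for labelling the leaves, which you omitted, is common to both sides. A vertex of $C$ dominating (resp.\ dominated by) the other two exists if and only if $\mathcal T[C]$ is transitive, and it is then unique. So $e^+(C)=e^-(C)\in\{0,2\}$, and the two counts agree. This is precisely the paper's proof; equivalently, it is the rooted-substitution argument applied to the path $x\leftarrow u\to y$.

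\textbf{The polynomial route, as written, does not close the gap either.}

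Your first case should be phrased via the parity of the number of edges of $S$ joining one of $x,u,y$ to its own pendant leaves. As stated, a \emph{star centred at $u$ of odd size} may contain $x$ together with an even number of leaves of $u$, and then no in/out swap is available. Phrased by pendant counts, the leaf-swap involution is fine.

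In the residual case all three pendant counts are even, so exactly one central edge $uw$ is used. Neither of your suggestions produces cancellation there:
\begin{itemize}
\item The $x\leftrightarrow y$ symmetry is induced by an automorphism of $T$ itself, since both central edges leave $u$. It therefore pairs terms of \emph{equal} sign.
\item Replacing $ux$ by a leaf edge $uz$ either changes the isomorphism type of $S$ (when $S$ uses leaves at $x$) or lands in your first case.
\end{itemize}
Your $(1-z^2)^h$ heuristic only kills odd pendant counts, i.e.\ your first case. In the residual case the leaf sums $[z^{2i}](1-z^2)^h=(-1)^i\binom hi$ are non-zero, so the cancellation must come from pairing copies.

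What would work is to exchange the roles of $u$ and $w$. Transport the chosen pendant leaves between them by an orientation-preserving bijection of their balanced pendant stars; this is where $|S_0|=t$ is used. This map preserves every leaf-edge orientation and reverses only $uw$, so it flips the sign. As submitted, however, this case is left open.
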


\begin{proof}
Let $E_h^+$ be the exceptional tree in which the middle center is a source,
and let $E_h^-=\overline{E_h^+}$ be its converse, in which the middle center
is a sink.  Thus $E_h^+$ is obtained from a path
$$
        x-u-y
$$
by attaching $2h$ leaves to each of $x,u,y$, orienting exactly $h$ pendant
edges away from each center and exactly $h$ pendant edges towards each
center, and orienting the two central edges as
$$
        x\leftarrow u\to y.
$$
The converse $E_h^-$ has the same balanced pendant stars, but its central
edges are oriented as
$$
        x\to u\leftarrow y.
$$

It is enough to compare labeled embeddings of $E_h^+$ and $E_h^-$ into an
arbitrary tournament $T$, since $E_h^+$ and $E_h^-$ have the same number of
automorphisms.  This is the same passage from embeddings to copies used in
the definition of $f_T(D)$ in Section~\ref{sec:prelim}.

Fix a set $C=\{a,b,c\}$ of three vertices of $T$.  We shall compare the
embeddings whose three centers are mapped precisely to the vertices of $C$.
Call a choice of pendant leaves around $C$ \emph{balanced} if, for each
$v\in C$, we choose $h$ vertices which receive an edge from $v$ and $h$
vertices which send an edge to $v$, all these chosen vertices being distinct
and lying outside $C$.  Such a balanced choice determines the pendant part of
an embedding, up to the same leaf-labeling factor for $E_h^+$ and $E_h^-$.
In particular, for the comparison below, the number of balanced pendant
choices depends only on $T$ and on $C$, and not on whether we are embedding
$E_h^+$ or $E_h^-$.

Now consider the tournament induced by $C$.  There are two cases.

First suppose that $T[C]$ is cyclic.  Then no vertex of $C$ dominates the
other two vertices, and no vertex of $C$ is dominated by the other two
vertices.  Hence no vertex of $C$ can play the role of the middle source in
$E_h^+$, and no vertex of $C$ can play the role of the middle sink in
$E_h^-$.  Therefore the set $C$ contributes zero embeddings of $E_h^+$ and
zero embeddings of $E_h^-$.

Now suppose that $T[C]$ is transitive.  Let $s(C)$ be its source and let
$t(C)$ be its sink.  Then $s(C)$ is the unique vertex of $C$ which dominates
the other two vertices, and $t(C)$ is the unique vertex of $C$ which is
dominated by the other two vertices.

For an embedding of $E_h^+$ with center set $C$, the middle center $u$ must
be mapped to $s(C)$, since $u$ must point to both endpoint centers.  The two
endpoint centers $x$ and $y$ may then be mapped to the two remaining vertices
of $C$ in either order.  Thus, after fixing the balanced pendant choices,
there are exactly two possible center maps for $E_h^+$.

Similarly, for an embedding of $E_h^-$ with center set $C$, the middle
center $u$ must be mapped to $t(C)$, since both endpoint centers must point
towards $u$.  Again the two endpoint centers may be mapped to the two
remaining vertices of $C$ in either order.  Thus, after fixing the same
balanced pendant choices, there are exactly two possible center maps for
$E_h^-$.

Consequently, every transitive triple $C$ contributes the same number of
labeled embeddings of $E_h^+$ and $E_h^-$, while every cyclic triple
contributes zero to both.  Summing over all three-element sets
$C\subseteq V(T)$ gives
$$
        \emb_T(E_h^+)=\emb_T(E_h^-).
$$
Since $|\Aut(E_h^+)|=|\Aut(E_h^-)|$, it follows that
$$
        f_T(E_h^+)=f_T(E_h^-)
$$
for every tournament $T$.  Hence the exceptional configuration is converse
invariant.
\end{proof}

\begin{proposition}
    The trees in $\mathcal E_4$ are converse invariant but not self-converse and cannot be obtained by the bridge-mirroring operation recursively from an orientation of a path.
\end{proposition}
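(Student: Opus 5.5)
The proof has three parts: converse invariance, failure of self-converseness, and failure of the recursive bridge-mirroring construction.

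\emph{Converse invariance.} A tree in $\mathcal E_4$ is exactly the exceptional configuration of Proposition~\ref{prop:diam4-remaining-case}. Indeed, $S_t=\{x,y\}$ with $t=2h$, $|S_0|=2h$, the pendant stars are balanced, and the central edges both point away from $u$ or both towards $u$. Hence Lemma~\ref{lem:diam4-exception-converse-invariant} applies directly.

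\emph{Not self-converse.} Take $E_h^+$, where $u$ points to both $x$ and $y$; the case $E_h^-$ follows by taking converses. Suppose $\phi$ were an isomorphism from $E_h^+$ to $\overline{E_h^+}=E_h^-$. The underlying tree has diameter four, so its center $u$ is unique, and $\phi$ must fix $u$. It must also send the non-leaf neighbors $\{x,y\}$ of $u$ to themselves. In $E_h^+$ both edges $ux,uy$ leave $u$, while in $E_h^-$ both enter $u$. So $\phi$ cannot preserve arc directions, and no such isomorphism exists. For the same reason, $E_h^+$ and $E_h^-$ are not isomorphic to each other.

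\emph{Not obtainable by recursive bridge-mirroring.} A bridge-mirroring of $D$ at $v$ has $2|V(D)|$ vertices. It also has an edge $e$ whose removal leaves two components, and the underlying graphs of these components are isomorphic via a map that exchanges the endpoints of $e$. So it suffices to show that $\UG(T)$ has no edge splitting it into two isomorphic halves, each containing an endpoint of the edge and exchanged by an isomorphism.

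Count: $|V(T)|=3+6h$ is odd, so $\UG(T)$ cannot be split into two halves of equal size. Hence $T$ is not a bridge-mirroring of anything, whether or not the recursion started from a path.

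Finally, $T$ is not itself an orientation of a path, since it has vertices of degree $2h+2\ge 4$. The conjecture's allowed class consists of the self-converse trees together with those obtained by at least one bridge-mirroring, plus paths. By the three parts above, $T$ lies in none of them.

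The main obstacle is the precise reading of ``obtained recursively'', in particular whether zero mirrorings are allowed. This is handled by noting explicitly that $T$ is not a path, and that any tree produced by at least one mirroring has an even number of vertices.
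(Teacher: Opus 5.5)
Your proof is correct and follows essentially the same route as the paper. Converse invariance comes from Lemma~\ref{lem:diam4-exception-converse-invariant}. Non-self-converseness holds because every isomorphism must fix $u$ while the arcs $ux,uy$ reverse direction; you justify fixing $u$ by its being the unique center, whereas the paper uses that it is the unique vertex of degree $2h+2$. Ruling out bridge-mirroring is the parity count $|V(T)|=6h+3$ against the even order of any tree produced by at least one mirroring step, so your remark about an edge splitting the tree into two exchanged halves is not needed.
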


\begin{proof}
Let $T\in\mathcal E_4$. Then $T$ has
$$
        3(2h+1)=6h+3
$$
vertices, which is odd. Also $T$ is not a path and has maximum degree $2h+2\geq3$.

If a tree is obtained from a path by at least one bridge-mirroring operation, then its
number of vertices is even, since each bridge-mirroring step takes two disjoint copies of
the previous tree and adds one bridge edge between them. Since $T$ is not a path, any
recursive bridge-mirroring construction of $T$ would have to use at least one such step,
and hence would produce a tree with an even number of vertices. This is impossible.

By Lemma \ref{lem:diam4-exception-converse-invariant}, the tree $T$ is converse
invariant. Moreover, $T$ is not self-converse: the middle vertex is the unique vertex of
degree $2h+2$, so every isomorphism must fix it, but reversing all arcs changes it from
a source on the central path to a sink, or conversely. Therefore $T$ is a
non-self-converse converse-invariant tree which is not obtained by recursive
bridge-mirroring from a path.
\end{proof}

Putting everything together, we prove the main result of the paper:

\begin{proof}[Proof of Theorem \ref{thm:diam4classification}]
Let $T$ be a converse invariant orientation of a tree of diameter four
which is not a path.  If
$$
        \sum_{t\geq 1}|S_t|\geq 4,
$$
then Theorem \ref{thm:largecore} implies that $T$ is self-converse.

It remains to consider the case
$$
        \sum_{t\geq 1}|S_t|<4.
$$
By Lemma \ref{lem:remaining-shape}, there is a unique integer $t\geq 1$
such that $S_t=\{x,y\}$.  Proposition
\ref{prop:diam4-remaining-case} then shows that either $T$ is
self-converse, or $T\in\mathcal E_4$.

Conversely, every self-converse orientation is converse invariant, and
every tree in $\mathcal E_4$ is converse invariant by Lemma
\ref{lem:diam4-exception-converse-invariant}.  This proves the theorem.
\end{proof}

\section{A new family of non-self-converse converse invariant digraphs}

It is possible to generalize the construction of Definition \ref{def:exceptionaldiam4} of non-self-converse converse invariant trees using an operation we call \emph{rooted substitution}. This operation provides a new way of creating converse invariant digraphs which are not self-converse. In particular, it provides counterexamples to Conjecture \ref{conj:ai} of arbitrarily large diameter:

\begin{definition}\label{def:rooted-substitution}
Let $D$ be an oriented graph, and let $(R,r)$ be a rooted oriented graph.
We write $D[R]$, the \emph{rooted substitution} of $R$ in $D$, for the oriented graph obtained as follows.  For each
vertex $v\in V(D)$, take a copy $R_v$ of $R$, and identify the root
$r$ of $R_v$ with $v$.  The edges between the roots are the original
edges of $D$, and the edges inside each copy $R_v$ are oriented as in
$R$.
\end{definition}

\begin{definition}\label{def:rooted-self-converse}
A rooted oriented graph $(R,r)$ is called \emph{rooted self-converse} if there
is an isomorphism
$$
        \varphi:R\to \overline R
$$
such that $\varphi(r)=r$.
\end{definition}

\begin{proposition}\label{prop:rooted-substitution}
Let $D$ be a converse invariant oriented graph, and let $(R,r)$ be a
rooted self-converse oriented graph.  Then $D[R]$ is converse invariant.
In particular, if $D$ and $R$ are trees, then $D[R]$ is a converse
invariant tree. 
\end{proposition}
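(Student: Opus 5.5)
A direct combinatorial comparison, in the spirit of the proof of Lemma~\ref{lem:diam4-exception-converse-invariant}, looks easiest: compare labeled embeddings of $D[R]$ and of $\overline{D[R]}$ into an arbitrary tournament $T$ by first fixing the images of the roots. First note that $\overline{D[R]}=\overline D[\overline R]$. Fix a rooted isomorphism $\varphi:R\to\overline R$ with $\varphi(r)=r$. Applying $\varphi$ inside every copy $R_v$ and the identity on the roots gives an isomorphism $\overline D[\overline R]\cong\overline D[R]$. So it suffices to show $f_T(D[R])=f_T(\overline D[R])$ for every tournament $T$. Equivalently, we show $\emb_T(D[R])=\emb_T(\overline D[R])$ and $|\Aut(D[R])|=|\Aut(\overline D[R])|$.

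For the automorphism count, take the converse of $\overline D[R]$, which is $D[\overline R]$, and use $\varphi$ again: this shows $\overline D[R]\cong D[\overline R]$ is the converse of $D[R]$ up to isomorphism. A digraph and its converse have the same automorphism group, so the two counts agree. (In fact, for counting copies, $f_T(\overline{G})=f_{\overline T}(G)$ holds generally. The cleanest route is probably to work with labeled embeddings throughout and divide by the common automorphism number at the end.)

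For the embedding count, decompose a labeled embedding of $D[R]$ into two parts. The first is an injection $\sigma:V(D)\to V(T)$ that embeds $D$. The second is an assignment, to each $v$, of an embedding of $R$ rooted at $\sigma(v)$, with all non-root images disjoint from each other and from $\sigma(V(D))$. Let $W=\sigma(V(D))$ and let $g_v$ denote the non-root parts. The number of valid extensions then depends only on the multiset of root images, i.e.\ the set $W$ with each element carrying a copy of $R$. It does not depend on which vertex of $D$ is sent where, since every root carries the same $R$. Grouping by $W$, we get
\[
\emb_T(D[R])=\sum_{W}\mathrm{ext}_T(W)\cdot \emb_{T[W]}(D),
\]
and similarly for $\overline D[R]$ with $\emb_{T[W]}(\overline D)$. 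Here $\mathrm{ext}_T(W)$ counts the ways of attaching disjoint rooted copies of $R$ at each point of $W$, avoiding $W$. The main point to check carefully is exactly this factorization. The rooted copies attached at different roots may also use edges between them, but since $D[R]$ has no edges between distinct $R_v$'s other than root edges, the only constraint is vertex-disjointness. That constraint is symmetric in the roots, so $\mathrm{ext}_T(W)$ really is independent of $\sigma$ once $W$ is fixed. Converse invariance of $D$ applied to the $|W|$-vertex tournament $T[W]$ gives $\emb_{T[W]}(D)=\emb_{T[W]}(\overline D)$, using that $D$ and $\overline D$ have equal automorphism counts. Summing over $W$ finishes the argument.

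The tree assertion is immediate: gluing trees $R_v$ at single vertices of a tree $D$ yields a tree. I expect no real obstacle beyond stating the extension count precisely. The one subtle point is that the rooted self-converse hypothesis is used only to convert $\overline D[\overline R]$ into $\overline D[R]$.
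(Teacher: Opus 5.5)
Your proof is correct and is essentially the paper's argument: you fix the set of root images, observe that the number of ways to attach vertex-disjoint rooted copies of $R$ depends only on that set, and apply converse invariance of $D$ to the induced subtournament; the only cosmetic difference is that you use $\varphi$ once globally to replace $\overline D[\overline R]$ by $\overline D[R]$, whereas the paper uses it locally to equate the rooted embedding counts of $R$ and $\overline R$ at each root. One small slip: in your automorphism paragraph the claim $\overline D[R]\cong D[\overline R]$, read literally, is false in general (it would make $D[R]$ self-converse), but the equality $|\Aut(\overline D[R])|=|\Aut(D[R])|$ that you need follows at once from $\overline D[R]\cong\overline{D[R]}$, which you had already established.
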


\begin{proof}
Let $U$ be an arbitrary tournament.  We count labeled embeddings; this is
equivalent to counting copies, since $D[R]$ and $\overline{D[R]}$ have
the same automorphism group.

Let $C\subseteq V(U)$ be a set of size $|V(D)|$.  We think of $C$ as the
set of images of the vertices of $D$, or equivalently, as the set of
roots of the attached copies of $R$.  For $c\in C$, let
$W_c\subseteq V(U)\setminus C$ be the set of vertices used for the
non-root vertices of the copy of $R$ attached at $c$.  The sets $W_c$ are
required to be pairwise disjoint and to have size $|V(R)|-1$.

For a fixed family $(W_c)_{c\in C}$, let
$$
        N_U(c,W_c)
$$
be the number of rooted embeddings of $(R,r)$ into $U[\{c\}\cup W_c]$
which send $r$ to $c$.  Since $(R,r)$ is rooted self-converse, we also
have
$$
        N_U(c,W_c)
        =
        N_U^{\,\overline R}(c,W_c),
$$
where the right-hand side denotes the number of rooted embeddings of
$(\overline R,r)$ into the same tournament, again sending the root to
$c$.

Define
$$
        W_U(C)
        =
        \sum_{(W_c)_{c\in C}}
        \prod_{c\in C} N_U(c,W_c),
$$
where the sum runs over all pairwise disjoint choices of the sets
$W_c\subseteq V(U)\setminus C$ with $|W_c|=|V(R)|-1$.  The number
$W_U(C)$ depends only on the set $C$, not on any particular bijection
from $V(D)$ to $C$.

Now fix $C$.  Once the root set is $C$, the choices inside the attached
copies contribute the common factor $W_U(C)$.  The only remaining choice
is the embedding of the base graph $D$ into the tournament $U[C]$.
Therefore the number of labeled embeddings of $D[R]$ into $U$ with root
set $C$ is
$$
        W_U(C)\,\emb_{U[C]}(D).
$$
Similarly, since $\overline{D[R]}=\overline D[\overline R]$ and
$(R,r)$ is rooted self-converse, the number of labeled embeddings of
$\overline{D[R]}$ into $U$ with root set $C$ is
$$
        W_U(C)\,\emb_{U[C]}(\overline D).
$$

Since $D$ is converse invariant, we have
$$
        \emb_{U[C]}(D)=\emb_{U[C]}(\overline D)
$$
for every tournament $U[C]$.  Hence the two expressions above are equal
for every choice of $C$.  Summing over all $C\subseteq V(U)$ with $|C|=|V(D)|$ gives
$$
        \emb_U(D[R])=\emb_U(\overline{D[R]}).
$$
Thus $D[R]$ is converse invariant.
\end{proof}

\bibliographystyle{plain}
{\footnotesize

  \bibliography{counting-tournaments.bib}

@article{ai2025number,
  title={Number of Subgraphs and Their Converses in Tournaments and New Digraph Polynomials},
  author={Ai, Jiangdong and Gutin, Gregory and Lei, Hui and Yeo, Anders and Zhou, Yacong},
  journal={Journal of Graph Theory},
  year={2025},
  publisher={Wiley Online Library}
}

@article{el2023number,
  title={About the number of oriented Hamiltonian paths and cycles in tournaments},
  author={El Sahili, Amine and Ghazo Hanna, Zeina},
  journal={Journal of Graph Theory},
  volume={102},
  number={4},
  pages={684--701},
  year={2023},
  publisher={Wiley Online Library}
}

@article{zhao2020impartial,
  title={Impartial digraphs},
  author={Zhao, Yufei and Zhou, Yunkun},
  journal={Combinatorica},
  volume={40},
  number={6},
  pages={875--896},
  year={2020},
  publisher={Springer}
}




}
\end{document}